\newcommand{\mR}{\mathbb{R}}
\newcommand{\mN}{\mathbb{N}}
\newcommand{\supp}{\mathrm{supp}\,}
\newcommand{\partialytau}{\partial_y\partial_{\tau}}
\DeclareMathOperator\artanh{artanh}
\newtheorem{thm}{Theorem}[section]
\newtheorem{lem}[thm]{Lemma}
\newtheorem{rem}[thm]{Remark}
\newtheorem{prop}[thm]{Proposition}
\newtheorem{assm}[thm]{Assumption}
\DeclareMathOperator{\Dom}{Dom}
\definecolor{bl}{rgb}{0 0 0}
\definecolor{ilariablue}{rgb}{0 0.4 0.85}
\definecolor{ilariared}{rgb}{0.95 0.1 0}
\definecolor{sjoerdgreen}{rgb}{0 0.7 0.2}
\definecolor{dmitrycol}{rgb}{0.9 0.2 0.4}
\title[Exponential time-decay for the wave equation]{On the exponential time-decay for the one-dimensional wave equation with variable coefficients} \author{Anton Arnold$^{1*}$, Sjoerd Geevers$^{2*}$, Ilaria
  Perugia$^{2*}$, Dmitry Ponomarev$^{1,3*}$}
\address{{\small
$^1$ Institute of Analysis and Scientific Computing, Vienna
University of Technology\\
Wiedner Hauptstrasse 8-10, 1040 Vienna, Austria\\
$^2$ Faculty of Mathematics, University of Vienna\\
Oskar-Morgenstern-Platz 1, 1090 Vienna, Austria\\
$^3$ St. Petersburg Department of V. A. Steklov Mathematical Institute, RAS,\\
Fontanka 27, 191023 St. Petersburg, Russia
}}
\thanks{*A. Arnold, S. Geevers, and I. Perugia have been funded by the
Austrian Science Fund (FWF)
through the project F~65 ``Taming Complexity in Partial Differential
Systems''. I. Perugia has also been funded by the FWF through the
project P~29197-N32. A. Arnold and D. Ponomarev were supported by the bi-national FWF-project I3538-N32}
\begin{document}


\maketitle

\begin{abstract}
We consider the initial-value problem for the
  {one-dimensional,} time-dependent wave equation with {positive,} 
  Lipschitz {continuous} coefficients, which are 
  constant outside a bounded region.
  Under the assumption of compact
  support of the initial data, we prove {that} the local energy
  decay{s {exponentially} fast in time,} and
  provide the explicit constant to which the solution converges for
  large times.
  We {give} explicit estimates of the rate of this exponential decay
  by two different techniques{. The first one is based on the
definition of a modified, weighted local energy, {with} suitably
constructed weights. The second one is based on the
integral formulation of the {problem}
and, under a more restrictive
assumption on the variation of the coefficients, allows us to obtain
improved decay rates.}
\end{abstract}
\medskip

{\footnotesize
\noindent
{\bf Keywords} {wave equation with variable coefficients, local energy decay, long-time asymptotics}\\[0.1cm]
\noindent
{\bf Mathematics Subject Classification } {35L05, 35L10, 35B40}}
\medskip

\section{Introduction}
\label{sec:intro}
The paper is dedicated to the investigation of the time-decay of the solution of the following initial-value problem 
\begin{equation}\label{eq:wave_1D}
\begin{cases}
\beta\left(x\right)\partial_{t}^{2}u\left(x,t\right)-\partial_x\left(\alpha\left(x\right)\partial_x u\left(x,t\right)\right)=0, & x\in\mathbb{R},\:t>0,\\
u\left(x,0\right)=u_0\left(x\right),\hspace{1em}\partial_{t}u\left(x,0\right)=u_1\left(x\right), & x\in\mathbb{R},
\end{cases}
\end{equation}
where the coefficients $\alpha$, $\beta$ are {positive,}
{Lipschitz}{, and constant outside a bounded domain,}
and the initial data $u_0$, $u_1$ are compactly supported.
Studying such time-decay is important for practical applications 
and it shall be used in our follow-up paper \cite{paper-LAP}.

Most relevant works, such as the recent 
ones \cite{Shapiro} and \cite{Boucl-Burq}, are focussed on proving the
decay in time of the solution derivatives in $L^2$-norm taken over
bounded sets. Such local energy decay is usually obtained in spatial
dimensions $d\geq 2$. Another recent paper \cite{Charao} contains
a time-decay result in the one-dimensional case under certain restrictions on $\alpha$, $\beta$ and localisation of the initial data $u_0$, $u_1$.

It is the purpose of the present work to improve such $L^2$-estimates
of the solution derivatives by showing that, in case of compactly
supported initial data, the decay of the solution in time is
exponential rather than algebraic. We also prove
that, in this case, an auxiliary condition on $\alpha$, $\beta$ used in \cite{Charao} can be avoided.
Moreover, we generalise the result to prove the decay not only of derivatives but also of the solution function itself. Namely, we conclude that the solution converges exponentially fast in time to a constant, which is explicitly computable in terms of $\alpha$, $\beta$ and $u_1$.

Let us denote $\mathbb{R}_+:=\left\{x\in\mathbb{R}: x\geq 0\right\}$
and start with a well-posedness result for
problem \eqref{eq:wave_1D}.

\begin{prop}\label{prop:WP}
Let $\alpha\in W^{1,\infty}\left(\mathbb{R}\right)$ and $\beta\in L^{\infty}\left(\mathbb{R}\right)$
with $\alpha\left(x\right)\geq\alpha_{\min}>0$ and $\beta\left(x\right)\geq\beta_{\min}>0$
for $x\in\mathbb{R}$. Furthermore,
let $u_{0}\in H^{1}\left(\mathbb{R}\right)$
and $u_{1}\in L^{2}\left(\mathbb{R}\right)$. Then, problem \eqref{eq:wave_1D} has
a unique solution $u\in C^{1}\left(\mathbb{R}_{+},L^{2}\left(\mathbb{R}\right)\right)\cap C\left(\mathbb{R}_{+},H^{1}\left(\mathbb{R}\right)\right)$, with the continuous dependence on the initial data, according to the following estimate
\begin{equation}\label{eq:WP_estim}
\left\Vert u\left(\cdot,t\right)\right\Vert _{H^{1}\left(\mathbb{R}\right)}+\left\Vert \partial_{t}u\left(\cdot,t\right)\right\Vert _{L^{2}\left(\mathbb{R}\right)}\leq C\left(\left\Vert u_{0}\right\Vert _{H^{1}\left(\mathbb{R}\right)}+\left\Vert u_{1}\right\Vert _{L^{2}\left(\mathbb{R}\right)}\right),\hspace{1em} 0\leq t\leq T,
\end{equation}
valid for any $T>0$, with some constant $C=C\left(T,\alpha,\beta\right)>0$.

\noindent Moreover, if $u_{0}\in H^{2}\left(\mathbb{R}\right)$ and $u_{1}\in H^{1}\left(\mathbb{R}\right)$,
then we have $u\in C^{2}\left(\mathbb{R}_{+},L^{2}\left(\mathbb{R}\right)\right)\cap C^{1}\left(\mathbb{R}_{+},H^{1}\left(\mathbb{R}\right)\right)\cap C\left(\mathbb{R}_{+},H^{2}\left(\mathbb{R}\right)\right)$.
\end{prop}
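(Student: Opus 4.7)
The plan is to recast the second-order problem as a first-order Cauchy problem in a suitable Hilbert space and then invoke standard semigroup theory (Lumer--Phillips / Hille--Yosida). Setting $v:=\partial_t u$ and $U:=(u,v)^T$, equation \eqref{eq:wave_1D} takes the form $U'(t)=\mathcal{A}U(t)$, $U(0)=(u_0,u_1)^T$, where
\begin{equation*}
\mathcal{A}\begin{pmatrix}u\\ v\end{pmatrix}=\begin{pmatrix} v \\ \beta^{-1}\partial_x(\alpha\,\partial_x u)\end{pmatrix},\qquad \Dom(\mathcal{A})=H^2(\mathbb{R})\times H^1(\mathbb{R}).
\end{equation*}
I would work on the state space $\mathcal{H}:=H^1(\mathbb{R})\times L^2(\mathbb{R})$, endowed with the equivalent weighted inner product
\begin{equation*}
\bigl\langle (u,v),(\tilde u,\tilde v)\bigr\rangle_{\mathcal{H}}:=\int_{\mathbb{R}}\bigl(\alpha\,\partial_x u\,\partial_x \tilde u + u\tilde u + \beta\, v\tilde v\bigr)\,\dd x,
\end{equation*}
whose equivalence with the standard $H^1\times L^2$ norm follows from $\alpha_{\min}\le\alpha\le\|\alpha\|_\infty$ and $\beta_{\min}\le\beta\le\|\beta\|_\infty$.

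Next, I would show that $\mathcal{A}$ is quasi-dissipative on $\mathcal{H}$: integration by parts (legitimate because elements of $\Dom(\mathcal{A})$ and their derivatives decay at infinity) yields, for $U=(u,v)\in\Dom(\mathcal{A})$,
\begin{equation*}
\langle\mathcal{A}U,U\rangle_{\mathcal{H}}=\int_{\mathbb{R}}\alpha\,\partial_x v\,\partial_x u\,\dd x+\int_{\mathbb{R}}uv\,\dd x+\int_{\mathbb{R}}\partial_x(\alpha\,\partial_x u)\,v\,\dd x=\int_{\mathbb{R}}uv\,\dd x,
\end{equation*}
so $\langle\mathcal{A}U,U\rangle_{\mathcal{H}}\le\tfrac12\|U\|_{\mathcal{H}}^2$, hence $\mathcal{A}-\tfrac12 I$ is dissipative. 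For the range condition, given $(f,g)\in\mathcal{H}$, I would solve $(\lambda I-\mathcal{A})(u,v)=(f,g)$ for some $\lambda>\tfrac12$ by eliminating $v=\lambda u-f$ and reducing to the scalar elliptic equation
\begin{equation*}
\lambda^{2}\beta\, u-\partial_x(\alpha\,\partial_x u)=\beta(g+\lambda f)\quad\text{on }\mathbb{R},
\end{equation*}
whose unique $H^{1}(\mathbb{R})$ solution is provided by Lax--Milgram, the bilinear form $\int(\alpha\,\partial_x u\,\partial_x w+\lambda^2\beta\, uw)\,\dd x$ being continuous and coercive. Since $\alpha\in W^{1,\infty}$, elliptic regularity on $\mathbb{R}$ upgrades this to $u\in H^{2}(\mathbb{R})$, and then $v=\lambda u-f\in H^{1}(\mathbb{R})$, so $(u,v)\in\Dom(\mathcal{A})$. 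By Lumer--Phillips, $\mathcal{A}$ generates a $C_0$-semigroup $\{e^{t\mathcal{A}}\}_{t\ge0}$ on $\mathcal{H}$ with $\|e^{t\mathcal{A}}\|_{\mathcal{H}\to\mathcal{H}}\le e^{t/2}$, which delivers existence, uniqueness, and the estimate \eqref{eq:WP_estim} on any finite interval $[0,T]$.

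Finally, for the higher-regularity statement, if $(u_0,u_1)\in\Dom(\mathcal{A})=H^{2}\times H^{1}$, standard semigroup theory ensures that $U(t)=e^{t\mathcal{A}}U_0\in C^{1}(\mathbb{R}_+,\mathcal{H})\cap C(\mathbb{R}_+,\Dom(\mathcal{A}))$, which translates into $u\in C(\mathbb{R}_+,H^{2})\cap C^{1}(\mathbb{R}_+,H^{1})\cap C^{2}(\mathbb{R}_+,L^{2})$. I expect the main technical point to be the range condition combined with the elliptic regularity step: one must verify that the $H^{1}$-solution of the scalar elliptic equation actually lies in $H^{2}(\mathbb{R})$, which relies crucially on the Lipschitz regularity of $\alpha$; all other steps are routine once the weighted inner product on $\mathcal{H}$ is chosen to compensate for the fact that the natural wave energy controls only $\partial_x u$ and not $u$ itself.
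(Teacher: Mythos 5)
Your proof is correct, and it reaches the same destination by a slightly different semigroup route. You and the paper work on the same state space $H^{1}(\mathbb{R})\times L^{2}(\mathbb{R})$ with literally the same equivalent inner product (the paper's $\langle Bu,B\tilde u\rangle_{L^{2}_{\beta}}+\langle v,\tilde v\rangle_{L^{2}_{\beta}}$ expands to your $\int(\alpha\,\partial_x u\,\partial_x\tilde u+u\tilde u+\beta v\tilde v)\,dx$), and both arguments hinge on the same elliptic step: solvability of $\lambda^{2}\beta u-\partial_x(\alpha\partial_x u)=h$ in $H^{2}(\mathbb{R})$, using $\alpha\in W^{1,\infty}$ with $\alpha\ge\alpha_{\min}$ to upgrade the Lax--Milgram solution (in the paper this appears as the surjectivity of $\widetilde B^{2}$ needed for self-adjointness of $B^{2}$). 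Where you differ is the generation mechanism: the paper splits the generator as a skew-adjoint operator plus the bounded perturbation $C=\bigl(\begin{smallmatrix}0&0\\ \beta^{-1}&0\end{smallmatrix}\bigr)$ (the $+\beta^{-1}$ inserted into $B^{2}$ to make it strictly positive is exactly what must be compensated by $C$), and invokes Stone's theorem plus the bounded-perturbation theorem; you keep the wave operator intact and apply Lumer--Phillips directly, verifying quasi-dissipativity and the range condition. The two are interchangeable here; the paper's decomposition makes the unitarity of the unperturbed group explicit, while yours avoids introducing the auxiliary operator $B$ altogether. One small quantitative slip: from $\langle\mathcal{A}U,U\rangle_{\mathcal{H}}=\int uv\,dx\le\frac12\int u^{2}\,dx+\frac12\int v^{2}\,dx$ you only get $\langle\mathcal{A}U,U\rangle_{\mathcal{H}}\le\omega\|U\|_{\mathcal{H}}^{2}$ with $\omega=\frac12\max\{1,\beta_{\min}^{-1}\}$, not $\omega=\frac12$, since the $\mathcal{H}$-norm weights $v$ by $\beta$; this changes only the constant in the growth bound $e^{\omega t}$ and is immaterial for \eqref{eq:WP_estim}.
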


{For the long-time analysis}, we shall now make more restrictive assumptions.

\begin{assm}\label{assm:alph_bet}
Let $\alpha$, $\beta\in W^{1,\infty}\left(\mR\right)$
be real-valued functions such that $\alpha(x)\geq\alpha_{\min}$, $\beta(x)\geq\beta_{\min}$ for $x\in\mR$,  and $\alpha(x)\equiv\alpha_0$, $\beta(x)\equiv\beta_0$
for $x\in\mR\backslash\Omega_{in}$, with some  
open bounded interval 
$\Omega_{in}\subset\mR$ and constants $\alpha_{\min}$, $\beta_{\min}$, $\alpha_0$, $\beta_0>0$.
\end{assm}

\begin{assm}\label{assm:ICs}
Suppose that $u_{0}\in H^{1}\left(\mathbb{R}\right)$, $u_{1}\in
L^{2}\left(\mathbb{R}\right)$ are real-valued functions with the
supports $\supp u_{0}$, $\supp u_{1}\subset\Omega_0$ for some 
open bounded interval
$\Omega_0\subset\mathbb{R}$.
\end{assm}

Our main results are given by the following two theorems.
\begin{thm}\label{thm:1D_decay}
Assume that $\alpha$, $\beta$, $\Omega_{in}$ are as in Assumption \ref{assm:alph_bet}, and $u_0$, $u_1$, $\Omega_0$  are as in Assumption \ref{assm:ICs}.
Then, for any 
open bounded interval 
$\Omega\subset\mathbb{R}$, the solution of \eqref{eq:wave_1D} obeys the decay estimate
	\begin{equation}
	\left\Vert u\left(\cdot,t\right)-u_{\infty}\right\Vert _{H^{1}\left(\Omega\right)}+\left\Vert \partial_{t}u\left(\cdot,t\right)\right\Vert _{L^{2}\left(\Omega\right)}\leq Ce^{-\Lambda t},\hspace{1em}t\geq 0,\label{eq:u_exp_conv}
	\end{equation}
	for some constant $C=C\left(u_{0},u_{1},\alpha,\beta,\left|\Omega_0\right|,\left|\Omega_{in}\right|\right){>0}$,
with $\left|\Omega_0\right|$, $\left|\Omega_{in}\right|$ denoting the Lebesgue measure of the sets $\Omega_0$, $\Omega_{in}$, respectively, and 
	\begin{equation}
	u_{\infty}:=\frac{1}{2\left(\alpha_0\beta_0\right)^{1/2}}\int_{\Omega_0}u_{1}\left(x\right)\beta\left(x\right)dx,\hspace{1em}
	 {\Lambda:=\frac{\gamma_{0}}{2 t_0}e^{-2\gamma_{0}}\left(1-4\gamma_{0}e^{-4\gamma_{0}}\right)^{1/2}},   \label{eq:u_infty_1D}
	\end{equation}
	\begin{equation}
\gamma_0:=\frac{t_0}{4}\left\Vert \frac{\alpha^\prime}{\left(\alpha\beta\right)^{1/2}}+\left(\frac{\alpha}{\beta^3}\right)^{1/2}\beta^\prime\right\Vert_{L^\infty\left(\Omega_{in}\right)},\hspace{1em} t_0:=\int_{\Omega_{in}}\left(\frac{\beta\left(x\right)}{\alpha\left(x\right)}\right)^{1/2}dx.\label{eq:gam0_t0}
	\end{equation}
\end{thm}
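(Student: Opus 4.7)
The plan is to combine (i) a conservation law to identify the limit $u_\infty$, with (ii) a weighted local energy estimate to quantify the exponential decay rate.

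First, I would observe that the momentum-type functional $M(t):=\int_{\mR}\beta(x)\,\partial_t u(x,t)\,dx$ is conserved in time: differentiating and inserting \eqref{eq:wave_1D} yields $M'(t)=\int_{\mR}\partial_x(\alpha\partial_x u)\,dx$, and the boundary term $\alpha_0\partial_x u|_{-\infty}^{+\infty}$ vanishes by finite speed of propagation. Hence $M(t)\equiv M(0)=\int_{\Omega_0}\beta\,u_1\,dx$. Outside $\Omega_{in}$ the PDE has constant coefficients $\alpha_0,\beta_0$, so by d'Alembert the asymptotic solution decomposes into freely-propagating left- and right-going waves at speed $c_0=\sqrt{\alpha_0/\beta_0}$ superposed on a residual constant; balancing their carried momenta with $M(0)$ yields the formula for $u_\infty$ in \eqref{eq:u_infty_1D}.

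Setting $v:=u-u_\infty$ (which solves the same PDE), I would then perform the Liouville change of variable $y(x):=\int_{x_l}^{x}\sqrt{\beta(s)/\alpha(s)}\,ds$, with $x_l$ the left endpoint of $\Omega_{in}$; this maps $\Omega_{in}$ to $[0,t_0]$ and normalises the local wave speed to one. A direct computation shows that the first-order coefficient appearing in the transformed equation is proportional to $\alpha'/\sqrt{\alpha\beta}+\sqrt{\alpha/\beta^3}\,\beta'$, which is precisely the function whose $L^\infty$-norm enters the definition of $\gamma_0$ in \eqref{eq:gam0_t0}. Next I would introduce the weighted local energy
\[
\mathcal{E}_w(t):=\frac12\int_{\mR}w(x)\bigl[\alpha(\partial_x v)^2+\beta(\partial_t v)^2\bigr]\,dx,
\]
where $w$ is piecewise of the form $e^{\mp 2\gamma_0 y(x)/t_0}$ on the two subintervals of $\Omega_{in}$ joined at a matching point (so that $w\geq e^{-2\gamma_0}$ throughout) and constant outside $\Omega_{in}$. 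Using \eqref{eq:wave_1D} and integration by parts,
\[
\frac{d}{dt}\mathcal{E}_w(t)=-\int_{\mR}w'(x)\,\alpha\,\partial_x v\,\partial_t v\,dx,
\]
and re-expressing $\partial_x v\,\partial_t v$ in the characteristic variables $\sqrt{\beta}\,\partial_t v\pm\sqrt{\alpha}\,\partial_x v$ recasts the right-hand side as a quadratic form whose negative-definiteness with margin $2\Lambda$ reduces to the algebraic condition $4\gamma_0 e^{-4\gamma_0}<1$, yielding $\frac{d}{dt}\mathcal{E}_w\leq-2\Lambda\,\mathcal{E}_w$. Grönwall's lemma then produces exponential decay of $\mathcal{E}_w$; since $w$ is bounded below by a positive constant, this transfers to the standard local energy on any bounded $\Omega$, and a Poincaré-type inequality (invoking that $v$ vanishes outside the dependence cone of $u_0-u_\infty$ far enough from $\Omega_0$) upgrades the $L^2$-decay of $\partial_x v$ to the $H^1$-decay of $v=u-u_\infty$, completing \eqref{eq:u_exp_conv}.

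The hard part will be designing $w$ and the characteristic splitting so that the quadratic form in $\frac{d}{dt}\mathcal{E}_w+2\Lambda\,\mathcal{E}_w$ is non-positive with the sharpest admissible $\Lambda$. The explicit form $\Lambda=\tfrac{\gamma_0}{\sqrt2\,t_0}e^{-2\gamma_0}(1-4\gamma_0 e^{-4\gamma_0})^{1/2}$ encodes the positive-definiteness threshold of this form, and its derivation amounts to a one-parameter optimisation over the weight exponent tuned against the lower-order perturbation of the Liouville-transformed equation.
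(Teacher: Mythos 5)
Your overall architecture (Liouville transform, characteristic variables, weighted energy, separate identification of $u_\infty$) points in the right direction, but two steps as written are genuinely broken. First, the energy functional $\mathcal{E}_w(t)=\frac12\int_{\mR}w\bigl[\alpha(\partial_x v)^2+\beta(\partial_t v)^2\bigr]dx$ with a \emph{single} weight $w$ bounded above and below by positive constants cannot decay at all: the unweighted global energy $\frac12\int_{\mR}[\alpha(\partial_x v)^2+\beta(\partial_t v)^2]dx$ is exactly conserved, so $\mathcal{E}_w(t)\ge (\inf w)\cdot E_{glob}(0)>0$ for all $t$, and the claimed inequality $\mathcal{E}_w'\le -2\Lambda\mathcal{E}_w$ is false for nontrivial data. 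The algebra confirms this: writing $R=\sqrt{\beta}\,\partial_t v+\sqrt{\alpha}\,\partial_x v$, $L=\sqrt{\beta}\,\partial_t v-\sqrt{\alpha}\,\partial_x v$, your identity gives $\mathcal{E}_w'=-\frac14\int w'\sqrt{\alpha/\beta}\,(R^2-L^2)\,dx$, and requiring $\mathcal{E}_w'+2\Lambda\mathcal{E}_w\le0$ pointwise forces simultaneously $w'\sqrt{\alpha/\beta}\ge 2\Lambda w$ (from the $R^2$ coefficient) and $w'\sqrt{\alpha/\beta}\le -2\Lambda w$ (from the $L^2$ coefficient), which is impossible for $\Lambda>0$. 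The paper avoids this by using \emph{two distinct} weights, $\phi_1$ on $(\partial_\tau v+\partial_y v)^2$ and $\phi_2$ on $(\partial_\tau v-\partial_y v)^2$, with $\phi_1$ increasing and $\phi_2$ decreasing, and by integrating only over the transformed $\Omega_{in}$ so that the outflow boundary conditions contribute nonpositive flux terms; the positivity constraint on $\phi_2$ at the right endpoint is what produces the transcendental threshold $\lambda_*(\gamma_0)$ and the explicit minorant $\Lambda$. Without splitting the two characteristic families you cannot reproduce this mechanism.

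Second, the upgrade from decay of $\|\partial_x u\|_{L^2(\Omega)}$ to decay of $\|u-u_\infty\|_{L^2(\Omega)}$ via a Poincar\'e inequality anchored ``outside the dependence cone'' does not work: outside the light cone of $\Omega_0$ one has $u\equiv 0$, hence $v=u-u_\infty\equiv -u_\infty\neq 0$ there, so $v$ has no zero to anchor to; and even anchoring to a zero of $u$ would require integrating $\partial_x u$ across the outgoing-wave region between $\Omega$ and the light cone, where the gradient does \emph{not} decay (the global energy is conserved and escapes precisely through that region). The paper's substitute is the exact balance law obtained by integrating the PDE over the space--time rectangle $(0,x_0)\times(0,t)$ together with the outflow conditions at $x=0,x_0$, which yields $u(x,t)-u_\infty=\frac12[u(x,t)-u(0,t)]+\frac12[u(x,t)-u(x_0,t)]-\frac{1}{2\sqrt{\alpha_0\beta_0}}\int_0^{x_0}\beta\,\partial_t u\,ds$; every term on the right is controlled by $\|\partial_x u\|_{L^2(0,x_0)}$ or $\|\partial_t u\|_{L^2(0,x_0)}$ on the \emph{fixed} interval, which both proves the $L^2$ convergence of $u$ and identifies $u_\infty$ rigorously (your momentum-balance heuristic for $u_\infty$ presupposes the asymptotic free-wave decomposition it is meant to establish). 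You would also need to address the regularity needed to differentiate the energy, which the paper handles by approximating the mild solution with classical ones.
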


We remark that the steady state $u_{\infty}$ given in
\eqref{eq:u_infty_1D} generalises the one resulting from the d'Alembert
formula for constant $\alpha$ and $\beta$, and for $u_0$, $u_1$ with compact
support:
$$
u_\infty = \frac12 \sqrt{\frac{\beta_0}{\alpha_0}}
\int_{\mathbb R} 
u_1(x)\, dx.
$$

Note that even though the decay \eqref{eq:u_exp_conv} is exponential, the decay rate $\Lambda$ 
might be be rather small in case of significantly varying coefficients $\alpha$, $\beta$ or large $\Omega_{in}$. In particular, the estimate for the decay rate given by the second equation of \eqref{eq:u_infty_1D} is exponentially small for large values of $\gamma_0$. 
Therefore, an alternative estimate is provided by the following
theorem, which employs a different idea and yields a better decay
rate, but in return requires an
assumption on the variation of~$\alpha$ and~$\beta$.

\begin{thm}\label{thm:better_rate}
Assume that $\alpha$, $\beta$, $\Omega_{in}$ are as in Assumption
\ref{assm:alph_bet}, $u_0$, $u_1$, $\Omega_0$  are as in Assumption
\ref{assm:ICs}, and $t_0$ is
  defined in \eqref{eq:gam0_t0}. 
Moreover, suppose 
that
\begin{equation}\label{eq:b0_cond}
b_0:=\left\Vert \alpha\right\Vert _{L^{\infty}\left(\Omega_{in}\right)}^{3/2}\left\Vert \beta\right\Vert _{L^{\infty}\left(\Omega_{in}\right)}^{1/2}\left\Vert \left(\frac{1}{\left(\alpha\beta\right)^{1/2}}\right)^{\prime}\right\Vert _{L^{2}\left(\Omega_{in}\right)}^{2}<\frac{1}{t_0}.
\end{equation}
Then, for the solution of \eqref{eq:wave_1D}, estimate \eqref{eq:u_exp_conv} holds true, 
and the decay rate is given by 
\begin{equation}\label{Lambda}
\Lambda=\frac{1}{2t_0}\left|\log\left(b_{0}t_0\right)\right|.
\end{equation}
\end{thm}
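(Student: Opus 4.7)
The plan is to transform problem \eqref{eq:wave_1D} into a Volterra-type integral equation in travel-time coordinates and iterate it a number of times proportional to $t/(2t_0)$, so that the assumption $b_0 t_0<1$ produces a geometric contraction translating into the exponential rate $\Lambda=\frac{1}{2t_0}\left|\log(b_0 t_0)\right|$.

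First, I would introduce the travel-time coordinate $y(x):=\int_0^x\sqrt{\beta/\alpha}\,ds$, which maps $\Omega_{in}$ onto an interval of length $t_0$ and transforms \eqref{eq:wave_1D} into $\tilde{\rho}(y)\partial_t^2 u-\partial_y(\tilde{\rho}(y)\partial_y u)=0$ with $\tilde{\rho}(y):=\sqrt{\alpha\beta}(x(y))$, or equivalently
\begin{equation*}
(\partial_t^2-\partial_y^2)u = q(y)\,\partial_y u,\qquad q(y):=\partial_y\log\tilde{\rho}(y),
\end{equation*}
with $q$ supported in the $y$-image of $\Omega_{in}$. Outside $\supp q$ the underlying equation is the free wave equation of speed $1$, to which the classical d'Alembert formula applies directly. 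A routine change of variables shows that
\begin{equation*}
\|q\|_{L^{2}(\mR)}^{2} = \int\tilde{\rho}^{2}\bigl((1/\tilde{\rho})'\bigr)^{2}dy \leq \|\alpha\|_{L^{\infty}(\Omega_{in})}^{3/2}\|\beta\|_{L^{\infty}(\Omega_{in})}^{1/2}\bigl\|(1/\sqrt{\alpha\beta})'\bigr\|_{L^{2}(\Omega_{in})}^{2} = b_0,
\end{equation*}
so condition \eqref{eq:b0_cond} reads precisely $\|q\|_{L^{2}}^{2}\,t_0<1$.

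Second, I would introduce the Riemann invariants $A:=\partial_t u+\partial_y u$ and $B:=\partial_t u-\partial_y u$, which satisfy the coupled transport equations $(\partial_t-\partial_y)A=\tfrac{q}{2}(A-B)$ and $(\partial_t+\partial_y)B=\tfrac{q}{2}(A-B)$. Integration along the corresponding characteristics produces a coupled Volterra-type system whose first equation reads
\begin{equation*}
A(y,t) = A(y+t,0) + \tfrac12\int_0^t q(y+t-s)\,(A-B)(y+t-s,s)\,ds,
\end{equation*}
and similarly for $B$; the kernels vanish except on the portions of characteristics that traverse the $y$-image of $\Omega_{in}$, each such portion having characteristic length $t_0$. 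A careful Cauchy--Schwarz estimate along these characteristic traversals, combined with $\|q\|_{L^{2}}^{2}\leq b_0$ and $|\supp q|=t_0$, shows that one full round trip through the inhomogeneity (of duration $2t_0$) multiplies the $L^{2}$-norm of $(A,B)$ on any fixed bounded interval $\Omega$ by a factor at most $b_0 t_0<1$.

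Third, iterating this Volterra system $n:=\lfloor t/(2t_0)\rfloor$ times yields
\begin{equation*}
\|(A,B)(\cdot,t)\|_{L^{2}(\Omega)} \leq C\,(b_0 t_0)^{n}\bigl(\|u_0\|_{H^{1}(\mR)}+\|u_1\|_{L^{2}(\mR)}\bigr),
\end{equation*}
and, since $n\geq t/(2t_0)-1$, this rewrites as $C'e^{-\Lambda t}$ with $\Lambda$ given by \eqref{Lambda}. Because $A,B$ determine $\partial_t u$ and $\partial_y u$, combining with the identification of the steady state $u_\infty$ from Theorem \ref{thm:1D_decay} and returning to $x$-coordinates via $dy=\sqrt{\beta/\alpha}\,dx$ delivers the full estimate \eqref{eq:u_exp_conv}. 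The hard part will be making the per-iteration contraction rigorous: the coupling between $A$ and $B$ means every iterate involves both, so one must organise the Cauchy--Schwarz estimate to extract exactly the factor $b_0 t_0$ per round trip (and not merely $(b_0 t_0)^{1/2}$), and one must verify that no derivative is lost through the successive scatterings so that the $L^{2}$-framework for the initial data is preserved.
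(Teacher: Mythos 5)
Your plan is essentially the paper's proof: the same travel-time change of variables, the same integral (d'Alembert/characteristics) formulation with $\gamma\,\partial_y v$ treated as a source, the same Cauchy--Schwarz step producing the constant $b_0t_0$, and the same iteration over traversal times to get $\Lambda=\frac{1}{2t_0}|\log(b_0t_0)|$; your Riemann-invariant system is exactly the differentiated d'Alembert formula \eqref{eq:dv_dy}--\eqref{eq:dv_dt}. The worry you flag about extracting $b_0t_0$ rather than $(b_0t_0)^{1/2}$ per round trip resolves itself if you run the Cauchy--Schwarz estimate at the level of the quadratic local energy $\mathcal{E}_{loc}$ of \eqref{eq:E_loc} with the weight $1/\rho$ split as $\rho\gamma^2\cdot(\partial_yv)^2/\rho$ inside each characteristic integral: this yields the energy contraction $b_0t_0$ per one-way traversal, i.e.\ the norm contraction $b_0t_0$ per round trip, with no loss. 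The one ingredient genuinely missing from your sketch is how to close the iteration: since the scattered contribution at time $\tau$ involves $\partial_y v$ at \emph{all} intermediate times, Cauchy--Schwarz only gives the integral inequality $\mathcal{E}_{loc}(\tau)\le b_0t_0\int_{\tau-1}^{\tau}\mathcal{E}_{loc}(\sigma)\,d\sigma$, and to turn this into the recursion $\mathcal{E}_{loc}(\tau)\le b_0t_0\,\mathcal{E}_{loc}(\tau-1)$ you must invoke the monotone decay of the conventional local energy, which follows from the exact outflow boundary conditions \eqref{eq:BCs_y} as in \eqref{eq:E_aux_dt}; without that monotonicity the naive iteration of the Volterra system does not obviously terminate with a clean geometric factor.
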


\begin{rem}
Theorems~\ref{thm:1D_decay} and~\ref{thm:better_rate} are valid
  also for complex-valued $u_{0}$, $u_{1}$ in Assumption~\ref{assm:ICs}.
Indeed, due to the linearity of problem \eqref{eq:wave_1D} and
the real-valuedness of $\alpha$, $\beta$, both theorems could be
applied separately to the real and imaginary parts of the initial data.
\end{rem}

Theorems~\ref{thm:1D_decay} and~\ref{thm:better_rate} are to be compared with the following recent result from~\cite{Charao}.

\begin{prop}{\cite[Thm. 1.2 (for $d=1$)]{Charao}}
\label{prop:Charao}
Assume that $\alpha\left(x\right)\equiv
\alpha_0$ and that $\beta$ satisfies Assumption \ref{assm:alph_bet}. Moreover, suppose that $\beta$ is such that
\begin{equation}\label{eq:eta_cond}
\eta:=\left\Vert \beta\right\Vert _{L^{\infty}\left(\Omega_{in}\right)}^{1/2}\left\Vert \left(\frac{1}{\beta^{1/2}}\right)^\prime\right\Vert _{L^{\infty}\left(\Omega_{in}\right)}{\left|\Omega_{in}\right|}<1.
\end{equation}
Then, for any bounded $\Omega\subset\mathbb{R}$, the solution of \eqref{eq:wave_1D} satisfies 
\begin{equation}\label{eq:Charao}
\left\Vert \partial_x u\left(\cdot,t\right)\right\Vert _{L^{2}\left(\Omega\right)}+\left\Vert \partial_{t}u\left(\cdot,t\right)\right\Vert _{L^{2}\left(\Omega\right)}\leq\frac{C}{t^{\left(1-\eta\right)/2}},\hspace{1em}t\geq t_1,
\end{equation}
for some constant $C>0$ and sufficiently large $t_1>0$.
\end{prop}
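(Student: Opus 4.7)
The plan is to exploit the constant-$\alpha$ assumption and the one-dimensional structure to reduce the problem to a constant-coefficient wave equation perturbed by a compactly supported potential, and then to deduce polynomial local decay via a Morawetz/virial-type multiplier estimate. First, I would perform a Liouville change of variables, introducing the travel-time coordinate $y(x):=\int_0^x\sqrt{\beta(s)/\alpha_0}\,\mathrm{d}s$, which maps $\Omega_{in}$ onto an interval of length $t_0$ (cf.\ \eqref{eq:gam0_t0} with $\alpha\equiv\alpha_0$), together with the new unknown $v(y,t):=\beta(x(y))^{1/4}u(x(y),t)$. A direct calculation shows that $v$ satisfies
\begin{equation*}
\partial_t^2 v-\partial_y^2 v+V(y)\,v=0,
\end{equation*}
where $V$ is supported in the image of $\Omega_{in}$ under $y$, and whose size is controlled by first and second derivatives of $\beta^{-1/2}$. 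The quantity $\eta$ defined in \eqref{eq:eta_cond} then arises naturally as a dimensionless measure of the strength of $V$ integrated over its support.

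Second, I would test this transformed equation against a Morawetz-type multiplier of the form $t\partial_t v+(y-y_0)\partial_y v+\tfrac12 v$, with $y_0$ chosen to accommodate the supports of $u_0$ and $u_1$. The free-wave part produces a positive time-derivative of a virial quantity plus boundary flux contributions that vanish once the support of the solution has escaped $\Omega_{in}$ at time $t_1$ by finite propagation speed. The potential term yields a bulk contribution bounded, via Cauchy--Schwarz and a one-dimensional Hardy inequality using the compact support of $V$, by $\eta\,E_{\rm loc}(t)$, where $E_{\rm loc}(t)$ is equivalent to $\|\partial_xu(\cdot,t)\|_{L^2(\Omega)}^2+\|\partial_tu(\cdot,t)\|_{L^2(\Omega)}^2$. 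Combined with standard energy conservation for the transformed equation, this produces a differential inequality of the form
\begin{equation*}
\tfrac{\mathrm{d}}{\mathrm{d}t}\bigl(t\,E_{\rm loc}(t)\bigr)\le \eta\,E_{\rm loc}(t),\qquad t\ge t_1,
\end{equation*}
whose integration yields $E_{\rm loc}(t)\lesssim t^{\eta-1}$ and hence, upon extracting the square root, the claimed decay rate $t^{-(1-\eta)/2}$ of $\|\partial_xu\|_{L^2(\Omega)}+\|\partial_tu\|_{L^2(\Omega)}$.

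The main obstacle is to carry out the two bulk estimates sharply enough that the constant in front of $E_{\rm loc}$ on the right-hand side of the differential inequality is exactly $\eta$ rather than a larger multiple of it. In particular, the Hardy-type bound on the cross term $\int V\,v\,(y-y_0)\partial_y v\,\mathrm{d}y$ must distribute the factor $\|(1/\beta^{1/2})^\prime\|_{L^\infty(\Omega_{in})}\,|\Omega_{in}|$ between the $v$ and $\partial_y v$ factors in a way that matches the specific definition of $\eta$ in \eqref{eq:eta_cond}. It is precisely this sharpness that makes the threshold $\eta<1$ necessary: for $\eta\ge 1$ the potential bulk term cannot be absorbed into the Morawetz coercive part, the differential inequality degenerates, and no polynomial decay can be read off from this argument. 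An alternative route, avoiding the Liouville reduction, would be to derive a Volterra-type integral equation for $u$ on characteristics and iterate, with $\eta$ playing the role of a per-reflection amplification constant; the same threshold $\eta<1$ would govern convergence of the iteration.
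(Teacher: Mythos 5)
Before anything else, note that Proposition~\ref{prop:Charao} is not proved in this paper at all: it is quoted verbatim from \cite{Charao} (as the citation in its header indicates) purely for comparison with Theorems~\ref{thm:1D_decay} and~\ref{thm:better_rate}. So there is no ``paper's own proof'' to measure your argument against; what can be assessed is only whether your sketch is internally viable.

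As it stands, it has two concrete gaps. First, the Liouville substitution $v=\beta^{1/4}u$ that converts \eqref{eq:wave_1D} into $\partial_t^2v-\partial_y^2v+Vv=0$ produces a potential $V$ containing \emph{second} derivatives of $\beta^{-1/4}$, whereas Assumption~\ref{assm:alph_bet} only gives $\beta\in W^{1,\infty}$; under the stated hypotheses $V$ is merely a distribution, and the quantity $\eta$ in \eqref{eq:eta_cond} involves only first derivatives, so your reduction does not match the regularity of the statement. (This is precisely why the present paper, in Section~\ref{sec:1D_decay_proof}, transforms to \eqref{eq:v_pbm} \emph{without} the amplitude factor, keeping a first-order term $\gamma\partial_y v$ with $\gamma\in L^\infty$ instead of a potential.) Second, the assertion that the boundary fluxes in the Morawetz identity ``vanish once the support of the solution has escaped $\Omega_{in}$'' is false: the solution never leaves $\Omega_{in}$ in finite time when $\beta$ is genuinely variable --- repeated partial reflections are exactly the mechanism preventing finite-time decay --- and what one actually has outside $\Omega_{in}$ are outgoing (outflow) relations at $\partial\Omega_{in}$, which contribute sign-definite but nonzero flux terms. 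Finally, the step on which the whole result hinges --- showing that the bulk term produced by the perturbation is absorbed with the \emph{exact} constant $\eta$ of \eqref{eq:eta_cond}, so that the differential inequality reads $\tfrac{d}{dt}(tE_{\rm loc})\le\eta E_{\rm loc}$ and not $\le C\eta E_{\rm loc}$ --- is named as ``the main obstacle'' but not carried out; without it the exponent $-(1-\eta)/2$ and the threshold $\eta<1$ cannot be extracted. If you want a self-contained proof, the route actually followed in \cite{Charao} (a weighted multiplier estimate on the untransformed equation) or the characteristic/Volterra iteration you mention at the end, which only needs $\gamma\in L^\infty$ and is close in spirit to the proof of Theorem~\ref{thm:better_rate} here, are both more promising than the potential reduction.
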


Note that, when the initial data are compactly supported, the
algebraic decay in \eqref{eq:Charao}  appears to be sub-optimal compared to both
Theorem~\ref{thm:1D_decay} and Theorem~\ref{thm:better_rate}, which
show the exponential decay of the solution derivatives. As {observed}
in \cite[Rem.~1.2]{Charao}, the smallness assumption
\eqref{eq:eta_cond} on $\eta$ measuring the relative perturbation of
the coefficients
is essential in the proof there,
but it is not needed in our proof of Theorem~\ref{thm:1D_decay}.

For the wave equation \eqref{eq:wave_1D} with \emph{constant}
coefficients and compactly supported initial data, the solution
converges on all bounded domains $\Omega$ to the constant $u_\infty$
in \emph{finite time}. However, for variable coefficients, convergence in finite time does not hold and exponential
decay is the generic scenario: as an example, consider
\eqref{eq:wave_1D} with $\alpha(x)\equiv \alpha_0$ in
$\mR$ and {$\beta$} Lipschitz with $\beta(x)=\beta_1$ on
a closed subinterval of some open set $\Omega_{in}$, and $\beta(x)=\beta_0$ in
$\mR\setminus\Omega_{in}$. A localised, travelling wave packet starting within $\Omega_{in}$ gets partly reflected and partly transmitted. Hence, its local norm $\left\Vert u\left(\cdot,t\right)-u_{\infty}\right\Vert _{H^{1}\left(\Omega_{in}\right)}$ decays by a constant factor at each of these reflections, giving rise to an exponential decay.
On the other hand, we note that, for non-compactly supported initial data, the decay of the derivatives of the solution in time is generally related to the decay of the initial data at infinity. This can be easily seen already for the constant-coefficient case: the solution furnished explicitly by the d'Alembert formula decays to some $u_\infty$ only algebraically fast if the assumed decay at infinity of $u_0$ or $u_1$ is algebraic. 

The following Sections~\ref{sec:1D_WP},~\ref{sec:1D_decay_proof},
  and~\ref{sec:better_rate_proof} are devoted to the proof of
  Proposition \ref{prop:WP}, Theorem~\ref{thm:1D_decay}, and Theorem~\ref{thm:better_rate},
  respectively. Some conclusions are drawn in Section~\ref{sec:conc}, and 
  technical lemmas used in the proof of Theorem~\ref{thm:1D_decay} are
  deferred to the Appendix.


\section{Proof of Proposition \ref{prop:WP}}\label{sec:1D_WP}
One standard approach {to prove} 
the well-posedness of problem \eqref{eq:wave_1D} is
to construct the solution $u$, for coefficients $\alpha$ and $\beta$
sufficiently regular, by a Galerkin approximation (e.g.\ see \cite[Sect. 7.2]{Evans}, or \cite[Par. 11.2]{Ren-Rog}). Here, we shall, however,
use semigroup theory tools, since they require lower regularity assumptions
on $\alpha$ and $\beta$. {Although} 
this approach is quite standard (e.g.
see \cite[Par. 2]{Reed},  \cite[Par. 3]{Strauss}, or
\cite[Sect. 12.3.2]{Ren-Rog}), we 
still outline
it, adapting to our particular setting. 

With the notation 
\[
\phi\left(x,t\right):=\left(u\left(x,t\right),v\left(x,t\right)\right)^{T},\hspace{1em}\hspace{1em}v\left(x,t\right):=\partial_{t}u\left(x,t\right),
\]
\[
A=A\left(x\right):=i\left(\begin{array}{cc}
0 & 1\\
\frac{1}{\beta\left(x\right)}\partial_{x}\left(\alpha\left(x\right){\partial_{x}\,\cdot}\,\right)-\frac{1}{\beta\left(x\right)} & 0
\end{array}\right),\hspace{1em}\hspace{1em}C=C\left(x\right):=\left(\begin{array}{cc}
0 & 0\\
\frac{1}{\beta\left(x\right)} & 0
\end{array}\right),
\]
problem \eqref{eq:wave_1D} is equivalent to the first order linear system
\begin{equation}\label{eq:1storder}
\begin{cases}
  \partial_{t}\phi\left(x,t\right)={\left(-iA \left(x\right)+C \left(x\right)\right)}\phi\left(x,t\right),
  & x\in\mathbb{R},\hspace{1em}t>0,\\
\phi\left(x,0\right)=\phi_{0}\left(x\right):=\left(u_{0}\left(x\right),u_{1}\left(x\right)\right)^{T}, & x\in\mathbb{R}.
\end{cases}
\end{equation}
We split the proof into 2 steps.

\subsection*{Step 1. Functional setting.}
We {set
$L^{2}_{\beta}\left(\mathbb{R}\right):=L^{2}\left(\mathbb{R};\beta\left(x\right)dx\right)$}. Due to the assumption $\beta$, $1/\beta\in
L^{\infty}\left(\mathbb{R}\right)$, {the space} ${L^{2}_{\beta}\left(\mathbb{R}\right)}$ 
actually consists of the same functions as~$L^{2}\left(\mathbb{R}\right)$.
We define the operator $B^{2}:=-\frac{1}{\beta\left(x\right)}\partial_{x}\left(\alpha\left(x\right){\partial_{x}\,\cdot \,}\right)+\frac{1}{\beta\left(x\right)}$
in ${L^{2}_{\beta}\left(\mathbb{R}\right)}$. 
Since $\alpha$, $\beta$ are assumed to be positive and bounded away from zero, the operator $B^2$ is strictly positive. 
We claim that this operator is self-adjoint with its (maximal) domain
being $\Dom B^{2}=H^{2}\left(\mathbb{R}\right)$. Indeed, the
symmetry of the operator $B^{2}$ is evident, and {$w\in
  H^2\left(\mathbb{R}\right)$ implies $B^2w\in L^{2}_{\beta}\left(\mathbb{R}\right)$.}
By \cite[Lem 2.3]{Teschl}, it
remains to verify its surjectivity on ${L^{2}_{\beta}\left(\mathbb{R}\right)}$ 
or, equivalently, on
$L^{2}\left(\mathbb{R}\right)$.
The latter follows
from the fact that, for $\alpha\in W^{1,\infty}\left(\mathbb{R}\right)$,
the operator $\widetilde{B}^{2}:=-\partial_{x}\left(\alpha\left(x\right){\partial_{x}\,\cdot \,}\right)+1$
from $H^{2}\left(\mathbb{R}\right)$ into $L^{2}\left(\mathbb{R}\right)$
is strictly positive, and hence invertible.

Next, we define in ${L^{2}_{\beta}\left(\mathbb{R}\right)}$ 
the positive square
root $B:=\sqrt{B^{2}}$, whose
(maximal) domain is $\Dom B=H^{1}\left(\mathbb{R}\right)$. In fact,
for any $w\in H^{1}\left(\mathbb{R}\right)$, we have
\begin{align*}
\left\Vert Bw\right\Vert _{{L^{2}_{\beta}\left(\mathbb{R}\right)}}^{2} & =\left\langle Bw,Bw\right\rangle _{{L^{2}_{\beta}\left(\mathbb{R}\right)}}=\int_{\mathbb{R}}\left(B^{2}w\right)\left(x\right)w\left(x\right)\beta\left(x\right)dx\\
 & =\int_{\mathbb{R}}\left[\alpha\left(x\right)\left(w^{\prime}\left(x\right)\right)^{2}+w^{2}\left(x\right)\right]dx,
\end{align*}
which, due to positivity and boundedness of $\alpha$, shows the
equivalence {between $\left\Vert B\,\cdot\,\right\Vert _{{L^{2}_{\beta}\left(\mathbb{R}\right)}}$ and
$\left\Vert \,\cdot\,\right\Vert _{H^{1}\left(\mathbb{R}\right)}$}.

With this preparation, we can introduce the Hilbert space $X:=H^{1}\left(\mathbb{R}\right)\times L^{2}\left(\mathbb{R}\right)$
equipped with the
{inner product $\langle \phi,\widetilde \phi\rangle_X:=
  \langle Bu,B\widetilde
  u\rangle_{L^{2}_{\beta}\left(\mathbb{R}\right)}+
  \langle v,\widetilde
  v\rangle_{L^{2}_{\beta}\left(\mathbb{R}\right)}$}.
Note that we have $\phi_{0}\in X$.

\subsection*{Step 2. Evolution semigroup.}

Observe that the matrix operator $A$ is symmetric in
$X$, since for any $\phi$, $\widetilde{\phi}\in H^{2}\left(\mathbb{R}\right)\times H^{1}\left(\mathbb{R}\right)$, we have 
\begin{align*}
\left\langle A\phi,\widetilde{\phi}\right\rangle _{X} & =i\left\langle Bv,B\widetilde{u}\right\rangle _{{L^{2}_{{\beta}}\left(\mathbb{R}\right)}}+i\left\langle -B^{2}u,\widetilde{v}\right\rangle _{{L^{2}_{{\beta}}\left(\mathbb{R}\right)}}\\
&=i\left\langle v,B^{2}\widetilde{u}\right\rangle_{{L^{2}_{{\beta}}\left(\mathbb{R}\right)}}-i\left\langle Bu,B\tilde{v}\right\rangle _{{L^{2}_{{\beta}}\left(\mathbb{R}\right)}}
 =\left\langle \phi,A\widetilde{\phi}\right\rangle _{X}.
\end{align*}
In fact, $A$ is self-adjoint in $X$ with (maximal) domain $\Dom A=H^{2}\left(\mathbb{R}\right)\times H^{1}\left(\mathbb{R}\right)$.
The latter statement follows from the maximality of $\Dom B^{2}=H^{2}\left(\mathbb{R}\right)$.

By Stone's theorem \cite[Ch.\ 1 Thm.\ 10.8]{Pazy}, the operator
$-iA$ generates a $C_{0}$-semigroup of unitary operators on $X$,
and the operator $C$ is a bounded perturbation. Therefore, using
results on the perturbation of semigroups \cite[Sect.\ 3.1 Thm.\ 1.1]{Pazy},
the operator $-iA+C${, which defines problem~\eqref{eq:1storder},} generates a $C_{0}$-semigroup of bounded
operators on $X$.
Consequently, we have the following two solution concepts {for
  problem~\eqref{eq:1storder}, and thus for problem~\eqref{eq:wave_1D}}, depending
on the regularity of $\phi_{0}=\left(u_{0},u_{1}\right)^{T}$ (see
\cite[p.\ 105]{Pazy} for this discussion):
\begin{itemize}
\item[ \emph{i)}] For $\phi_{0}\in X$,
problem~{\eqref{eq:1storder}} has a unique mild solution, i.e. $\phi\in C\left(\mathbb{R}_{+},X\right)$,
and hence
$u\in C\left(\mathbb{R}_{+},H^{1}\left(\mathbb{R}\right)\right)\cap C^{1}\left(\mathbb{R}_{+},L^{2}\left(\mathbb{R}\right)\right)$,
$v=\partial_{t}u\in C\left(\mathbb{R}_{+},L^{2}\left(\mathbb{R}\right)\right)$
\item[\emph{ii)}] For $\phi_{0}\in\text{Dom }A,$
  problem~{\eqref{eq:1storder}} has a unique classical
solution (in the semigroup sense), i.e. $\phi\in C^{1}\left(\mathbb{R}_{+},X\right)\cap C\left(\mathbb{R}_{+},\text{Dom }A\right)$,
and hence $u\in C\left(\mathbb{R}_{+},H^{2}\left(\mathbb{R}\right)\right)\cap C^{1}\left(\mathbb{R}_{+},H^{1}\left(\mathbb{R}\right)\right)$,
$v=\partial_{t}u\in C^{1}\left(\mathbb{R}_{+},L^{2}\left(\mathbb{R}\right)\right)$.
\end{itemize}  
Finally, estimate \eqref{eq:WP_estim} is an automatic consequence of the semigroup approach due to \cite[Sect.\ 1.2 Thm.\ 2.2]{Pazy}.
\qed


\section{Proof of Theorem \ref{thm:1D_decay}}\label{sec:1D_decay_proof}

Without loss of generality, let us take
        $\Omega=\Omega_{in}=\Omega_0=\left(0,x_0\right)$ for some
        $x_0>0$. This is possible since the decay in
        the region of interest $\Omega$ can be deduced by enlarging it (if necessary) to an interval
         containing both $\Omega_0$ and $\Omega_{in}${; moreover, as the
         problem is posed on the entire real line, we can choose the origin at the left end of the resulting interval}. 
	
{While the result of this theorem is stated for the mild solution 
\begin{equation}\label{eq:u_reg}
  u\in C^{1}\left(\mathbb{R}_{+},L^{2}\left(\mathbb{R}\right)\right)\cap C\left(\mathbb{R}_{+},H^{1}\left(\mathbb{R}\right)\right),
\end{equation}
its proof requires
the regularity of classical solutions established in
Proposition~\ref{prop:WP}. 
Hence we approximate the initial data $(u_0,\, u_1)^T\in
X{:=H^1\left(\mR\right)\times L^2\left(\mR\right)}$ by a sequence
{of functions} $(u_0^n,\, u_1^n)^T_{n\in\mN}\subset H^2\left(\mR\right)\times
H^1\left(\mR\right)$ with support in $\Omega_0$ 
and converging in $X$ to $(u_0,\, u_1)^T$. Due to Proposition \ref{prop:WP}, each approximate initial condition $(u_0^n,\, u_1^n)$ gives rise to a classical solution
\begin{equation}\label{eq:un_reg}
  u^n\in C^{2}\left(\mathbb{R}_{+},L^{2}\left(\mathbb{R}\right)\right)\cap C^{1}\left(\mathbb{R}_{+},H^{1}\left(\mathbb{R}\right)\right)\cap C\left(\mathbb{R}_{+},H^{2}\left(\mathbb{R}\right)\right).
\end{equation}
By the linearity of problem \eqref{eq:wave_1D} and bound \eqref{eq:WP_estim}}, these classical solutions converge uniformly to {the
  mild solution} $u$ on any finite time interval $[0,T]$:
\begin{equation}\label{un-conv}
  u^n \:\stackrel{n\to \infty}{\longrightarrow}\: u \quad \mbox{in } C^1([0,T],L^2(\mR))\cap C([0,T],H^1(\mR)).
\end{equation}

\noindent
We recall that the notions of classical and mild solutions are discussed in \cite[p.~105]{Pazy}.
\smallskip

The proof is performed in 3 steps. 
In Step~1,
        {for any fixed $n\in\mN$, we consider
          problem~\eqref{eq:wave_1D} for initial data $(u_0^n,\,
          u_1^n)\in H^2\left(\mR\right)\times
H^1\left(\mR\right)$ and classical solution $u^n$ as
in~\eqref{eq:un_reg}. W}e
        transform \eqref{eq:wave_1D} into an auxiliary
        problem, for which we construct a weighted, local energy
        functional that would {admit} 
         an exponential decay under
        certain conditions on weight functions.
        In Step~2,
        we construct these weight functions such that the rate of decay of the energy functional can be estimated explicitly.
	In Step~3,
        we use the local energy decay {and the convergence in~\eqref{un-conv}}
        to prove that the {mild} solution {$u$} of~\eqref{eq:wave_1D}, for large
	times, converges
        to a constant uniformly on $\left[0,x_{0}\right]$. Moreover,
	we identify this constant as $u_{\infty}$ defined in \eqref{eq:u_infty_1D}.
	
\subsection*{Step 1. Auxiliary local energy functional.}
{{As anticipated,} here and in Step~2, we 
  establish the exponential-in-time decay of any classical solution $u^n$, i.e.\ for fixed index $n\in\mN$.}

Let us perform the
change of variable 
	\begin{equation}
	x\mapsto y\left(x\right):=\dfrac{1}{t_0}\int_{0}^{x}\sqrt{\beta\left(\xi\right)/\alpha\left(\xi\right)}d\xi ,\hspace{1em}x\left(s\right)=y^{-1}\left(s\right),\label{eq:y_def}
	\end{equation}
	\begin{equation}
	t\mapsto \tau\left(t\right):=t/t_0,\hspace{1em}t\left(\tau\right)=t_0\tau,\hspace{1em} t_0:=\int_{0}^{x_0}\sqrt{\beta\left(\xi\right)/\alpha\left(\xi\right)}d\xi,\label{eq:tau_def}
	\end{equation}
and denote $v\left(y,\tau\right):={u^n}\left(x\left(y\right),t_0\tau\right)$, $v_{0}\left(y\right):={u^n_{0}}\left(x\left(y\right)\right)$,
$v_{1}\left(y\right):=t_0\, {u^n_{1}}\left(x\left(y\right)\right)$,
\begin{equation}\label{eq:rho_gam_def}
\rho\left(y\right):=\frac{1}{\sqrt{\alpha\left(x\left(y\right)\right)\beta\left(x\left(y\right)\right)}},\hspace{1em}
\gamma\left(y\right):=\frac{\rho^{\prime}\left(y\right)}{\rho\left(y\right)},\hspace{1em}\rho_0:=\rho\left(0\right)=\rho\left(1\right)=\frac{1}{\sqrt{\alpha_0\beta_0}}.
\end{equation}

Due to Assumption \ref{assm:alph_bet}, the map $x\mapsto y\left(x\right)$ is $W^{2,\infty}\left(\mathbb{R}\right)$, and hence $v$ inherits the regularity of {$u^n$} given by \eqref{eq:un_reg}:
\begin{equation}\label{eq:v_reg}
v\in C^{2}\left(\mathbb{R}_{+},L^{2}\left(\mathbb{R}\right)\right)\cap C^{1}\left(\mathbb{R}_{+},H^{1}\left(\mathbb{R}\right)\right)\cap C\left(\mathbb{R}_{+},H^{2}\left(\mathbb{R}\right)\right).
\end{equation}
We thus arrive at a problem equivalent to \eqref{eq:wave_1D}:
	\begin{equation}
	\begin{cases}
	\partial_{\tau}^{2}v\left(y,\tau\right)-\partial_{y}^{2}v\left(y,\tau\right)+\gamma\left(y\right)\partial_{y}v\left(y,\tau\right)=0, & y\in\mathbb{R},\hspace{1em}\tau>0,\\
	v\left(y,0\right)=v_{0}\left(y\right),\hspace{1em}\partial_{\tau}v\left(y,0\right)=v_{1}\left(y\right), & y\in\mathbb{R},
	\end{cases}\label{eq:v_pbm}
	\end{equation}
	where $\gamma\left(y\right)\equiv0$ for $y\in\mathbb{R}\backslash\left(0,1\right)$
	and $\text{supp }v_{0}$, $\text{supp
        }v_{1}\subseteq(0,1)$.
	
Motivated by the form of the global energy introduced in~\cite{Lewis} for the wave equation in~\eqref{eq:v_pbm},
we consider the local energy functional
	\begin{align}
	\mathcal{E}_{loc}\left(\tau\right):= & \frac{1}{2}\int_{0}^{1}\left[\left(\partial_{\tau}v\left(y,\tau\right)\right)^{2}+\left(\partial_{y}v\left(y,\tau\right)\right)^{2}\right]\frac{dy}{\rho\left(y\right)}\nonumber \\
	= & \frac{1}{4}\int_{0}^1\left[\left(\partial_{\tau}v\left(y,\tau\right)+\partial_{y}v\left(y,\tau\right)\right)^{2}+\left(\partial_{\tau}v\left(y,\tau\right)-\partial_{y}v\left(y,\tau\right)\right)^{2}\right]\frac{dy}{\rho\left(y\right)}>0.\label{eq:E_loc}
	\end{align}
This quantity is decaying since, by direct calculations employing
integration by parts, using \eqref{eq:rho_gam_def} and the wave
equation in \eqref{eq:v_pbm}, we have
\begin{align}\label{eq:E_aux_dt}
\mathcal{E}^\prime_{loc}\left(\tau\right)=&\int_{0}^{1}\left[
\partial_{\tau}v\left(y,\tau\right)\partial^2_{\tau}v\left(y,\tau\right)
+ \partial_{y}v\left(y,\tau\right){\partialytau} v\left(y,\tau\right)
                                            \right]\frac{dy}{\rho\left(y\right)}
  \nonumber \\
  {=}& {\int_{0}^{1}\left[\partial_{\tau}v\left(y,\tau\right)
   \left(\partial^2_{\tau}v\left(y,\tau\right)
   -\partial_{y}^{2}v\left(y,\tau\right)+\frac{\rho^{\prime}\left(y\right)}{\rho\left(y\right)}\partial_{y}v\left(y,\tau\right)\right)                                           
\right]\frac{dy}{\rho\left(y\right)}}\\
&{+\left[\partial_y v\left(1,\tau\right) \partial_\tau
        v\left(1,\tau\right)-\partial_y v\left(0,\tau\right)
        \partial_\tau v\left(0,\tau\right)\right]\frac{1}{\rho_0}} \nonumber\\
=&\left[\partial_y v\left(1,\tau\right) \partial_\tau v\left(1,\tau\right)-\partial_y v\left(0,\tau\right) \partial_\tau v\left(0,\tau\right)\right]\frac{1}{\rho_0}\le0. \nonumber
\end{align}
{Note that we have used here the regularity of a classical
  solution.} {Moreover}, in order to deduce the sign in the last line, we have used the
exact outflow
boundary conditions 
(well-defined for any {$\tau \geq 0$} due to \eqref{eq:v_reg}):
\begin{equation}\label{eq:BCs_y}
\left.\left(\partial_{y}-\partial_{\tau}\right)v\left(y,\tau\right)\right|_{y=0}=0=\left.\left(\partial_{y}+\partial_{\tau}\right)v\left(y,\tau\right)\right|_{y=1},
\end{equation}
which are due to the support properties of the functions $v_0$, $v_1$ and $\gamma$.
 
We note, in passing, that the global energy, defined in the same way as \eqref{eq:E_loc} but with the integration range replaced by $\mathbb{R}$, is conserved.
	
Even though the conventional local energy functional \eqref{eq:E_loc} can (and, in the proof of Theorem \ref{thm:better_rate}, will) be used to show the exponential decay of the solution derivatives under some quantitative restriction on the coefficients $\alpha$ and $\beta$, here, we proceed with an alternative strategy that does not require such a restriction.
To this effect, we consider now a modified local energy functional in the spirit of \cite[Sec.~3]{Arnold}:
	\begin{equation}
	E_{loc}\left(\tau\right):=\frac{1}{2}\int_{0}^{1}\left[\phi_{1}\left(y\right)\left(\partial_{\tau}v\left(y,\tau\right)+\partial_{y}v\left(y,\tau\right)\right)^{2}+\phi_{2}\left(y\right)\left(\partial_{\tau}v\left(y,\tau\right)-\partial_{y}v\left(y,\tau\right)\right)^{2}\right]\frac{dy}{\rho\left(y\right)},\label{eq:E_loc_mod}
	\end{equation}
	where $\phi_{1}\left(y\right)$, $\phi_{2}\left(y\right)$ are some
	strictly positive weight functions on $\left[0,1\right]$, which are yet
	to be chosen.
	
Similarly to \eqref{eq:E_aux_dt}, we differentiate under the integral sign to obtain (suppressing the arguments of all functions for the sake of brevity)
\begin{align}\label{eq:E_dt_estim_prelim}
E_{loc}^{\prime}\left(\tau\right)=&\int_{0}^{1}\left[\phi_{1}\left(\partial_{\tau}v+\partial_{y}v\right)\left(\partial_{\tau}^{2}v+{\partialytau}v\right)+\phi_{2}\left(\partial_{\tau}v-\partial_{y}v\right)\left(\partial_{\tau}^{2}v-{\partialytau}v\right)\right]\frac{dy}{\rho}\\
=&\int_{0}^{1}\left[\phi_{1}\left(\partial_{\tau}v+\partial_{y}v\right)\left(\partial_{y}^{2}v-\gamma\partial_{y}v+{\partialytau}v\right)+\phi_{2}\left(\partial_{\tau}v-\partial_{y}v\right)\left(\partial_{y}^{2}v-\gamma\partial_{y}v-{\partialytau}v\right)\right]\frac{dy}{\rho}\nonumber\\
=&{\frac12}\int_{0}^{1}\left[\phi_{1}\partial_{y}\frac{\left(\partial_{\tau}v+\partial_{y}v\right)^{2}}{\rho}-\phi_{2}\partial_{y}\frac{\left(\partial_{\tau}v-\partial_{y}v\right)^{2}}{\rho}+\left(\phi_{1}-\phi_{2}\right)\gamma\frac{\left(\partial_{\tau}v\right)^{2}-\left(\partial_{y}v\right)^{2}}{\rho}\right]dy,\nonumber
\end{align}
where we used the wave equation in~\eqref{eq:v_pbm} to eliminate
second-order time derivatives, and the definition of $\gamma$ in~\eqref{eq:rho_gam_def}.
Integrating \eqref{eq:E_dt_estim_prelim} by parts and employing boundary conditions \eqref{eq:BCs_y}, 
	we obtain
	\begin{align*}
	E_{loc}^{\prime}\left(\tau\right)= & \frac{1}{2}\int_{0}^{1}\left[-\phi_{1}^{\prime}\left(y\right)\left(\partial_{\tau}v\left(y,\tau\right)+\partial_{y}v\left(y,\tau\right)\right)^{2}+\phi_{2}^{\prime}\left(y\right)\left(\partial_{\tau}v\left(y,\tau\right)-\partial_{y}v\left(y,\tau\right)\right)^{2}\right.\\
	& \left.+\left(\phi_{1}\left(y\right)-\phi_{2}\left(y\right)\right)\gamma\left(y\right)\left(\left(\partial_{\tau}v\left(y,\tau\right)\right)^{2}-\left(\partial_{y}v\left(y,\tau\right)\right)^{2}\right)\right]\frac{dy}{\rho\left(y\right)}\\
	&-\left[\phi_{2}\left(1\right)\left(\partial_{y}v\left(1,\tau\right)\right)^{2}+\phi_{1}\left(0\right)\left(\partial_{y}v\left(0,\tau\right)\right)^{2}\right]\frac{2}{\rho_0}.
	\end{align*}
	We estimate
	\begin{align*}
	\left(\phi_{1}-\phi_{2}\right)\gamma\left(\left(\partial_{\tau}v\right)^{2}-\left(\partial_{y}v\right)^{2}\right) & =\left(\phi_{1}-\phi_{2}\right)\gamma\left(\partial_{\tau}v+\partial_{y}v\right)\left(\partial_{\tau}v-\partial_{y}v\right)\\
	& \leq\frac{1}{2}\left|\phi_{1}-\phi_{2}\right|\left|\gamma\right|\left(\left(\partial_{\tau}v+\partial_{y}v\right)^{2}+\left(\partial_{\tau}v-\partial_{y}v\right)^{2}\right),
	\end{align*}
	and therefore, since $\rho>0$, we arrive at
	\begin{align}
	E_{loc}^{\prime}\left(\tau\right)\leq & -\frac{1}{2}\int_{0}^{1}\left[\left(\phi_{1}^{\prime}\left(y\right)-\frac{\left|\gamma\left(y\right)\right|}{2}\left|\phi_{1}\left(y\right)-\phi_{2}\left(y\right)\right|\right)\left(\partial_{\tau}v\left(y,\tau\right)+\partial_{y}v\left(y,\tau\right)\right)^{2}\right.\nonumber \\
	& \left.+\left(-\phi_{2}^{\prime}\left(y\right)-\frac{\left|\gamma\left(y\right)\right|}{2}\left|\phi_{1}\left(y\right)-\phi_{2}\left(y\right)\right|\right)\left(\partial_{\tau}v\left(y,\tau\right)-\partial_{y}v\left(y,\tau\right)\right)^{2}\right]\frac{dy}{\rho\left(y\right)}.\label{eq:E_loc_dt}
	\end{align}

We aim to obtain
an estimate of the form $E_{loc}^{\prime}\left(\tau\right)\leq -\lambda_0 E_{loc}\left(\tau\right)$ with some $\lambda_0>0$, 
and explore the possibility of choosing
the weight functions $\phi_1\left(y\right)$, $\phi_2\left(y\right)$
that would lead to
such an estimate. In \cite{Arnold} it is shown in
a more general setting that such weights do exist theoretically. Here,
we present an explicit
construction of $\phi_1$, $\phi_2$, which will permit us to obtain an explicit estimate of the decay rate of the solution. 
	

\subsection*{Step 2. Construction of the weights $\phi_1$, $\phi_2$.}

Motivated by \eqref{eq:E_loc_dt}, we shall now construct functions $\phi_{1}\left(y\right)$, $\phi_{2}\left(y\right)>0$ in \eqref{eq:E_loc_mod}
	that satisfy, on $\left[0,1\right]$, the following differential inequalities
	\begin{equation}
	\begin{cases}
	\phi_{1}^{\prime}\left(y\right)-\frac{\left|\gamma\left(y\right)\right|}{2}\left|\phi_{1}\left(y\right)-\phi_{2}\left(y\right)\right|\geq\lambda\phi_{1}\left(y\right),\hspace{1em}y\in\left(0,1\right),\\
	-\phi_{2}^{\prime}\left(y\right)-\frac{\left|\gamma\left(y\right)\right|}{2}\left|\phi_{1}\left(y\right)-\phi_{2}\left(y\right)\right|\geq\lambda\phi_{2}\left(y\right),\hspace{1em}y\in\left(0,1\right),\\
	\phi_{1}\left(0\right)=\phi_{2}\left(0\right)=1,
	\end{cases}\label{eq:phi_syst}
	\end{equation}
	for some constant $\lambda>0$ to be chosen. To this end,
	we consider functions $\varphi_{1}\left(y\right)$, $\varphi_{2}\left(y\right)>0$ solving the linear ODE system
	\begin{equation}
	\begin{cases}
	\varphi_{1}^{\prime}\left(y\right)-\gamma_0\left(\varphi_{1}\left(y\right)-\varphi_{2}\left(y\right)\right)=\lambda\varphi_{1}\left(y\right),\hspace{1em}y\in\left(0,1\right),\\
	-\varphi_{2}^{\prime}\left(y\right)-\gamma_0\left(\varphi_{1}\left(y\right)-\varphi_{2}\left(y\right)\right)=\lambda\varphi_{2}\left(y\right),\hspace{1em}y\in\left(0,1\right),\\
	\varphi_{1}\left(0\right)=\varphi_{2}\left(0\right)=1,
	\end{cases}\label{eq:phi_syst_lin}
	\end{equation}
where $\gamma_{0}:=\frac{1}{2}\left\Vert \gamma\right\Vert _{L^{\infty}\left(0,1\right)}$.

It is easy to see that any solution of \eqref{eq:phi_syst_lin} automatically satisfies \eqref{eq:phi_syst}, provided that $\varphi_{1}\left(y\right)$, $\varphi_{2}\left(y\right)>0$ for all $y\in\left[0,1\right]$. Indeed, it suffices to verify that
\begin{equation}\label{eq:phis_diff}
0\leq\varphi_{1}\left(y\right)-\varphi_{2}\left(y\right)=\left|\varphi_{1}\left(y\right)-\varphi_{2}\left(y\right)\right|, \hspace{1em} y\in\left[0,1\right].
\end{equation}	
Summing both ODEs of \eqref{eq:phi_syst_lin}, multiplying by
  $e^{-2\gamma_0y}$, and integrating on $(0,y)$,
we obtain
\[
\varphi_1\left(y\right)-\varphi_2\left(y\right)=\lambda\int_0^y e^{2\gamma_0\left(y-\xi\right)}\left[\varphi_1\left(\xi\right)+\varphi_2\left(\xi\right)\right]d\xi, \hspace{1em} y\in\left[0,1\right],
\]	
and hence positivity of the integrand directly implies \eqref{eq:phis_diff}.
 
Note that it is possible to consider more general versions of \eqref{eq:phi_syst} (and hence \eqref{eq:phi_syst_lin}), with equations involving different parameters $\lambda_1$, $\lambda_2$ on the right-hand sides, and initial conditions given by a positive constant different from $1$. However, it can be shown that no advantage could be gained from such generalisations. 	
	
We shall now estimate the largest possible value $\lambda>0$ (or, more precisely, its supremum) such that $\varphi_1$, $\varphi_2>0$ in $\left[0,1\right]$.
	According to the first equation in \eqref{eq:phi_syst_lin} and \eqref{eq:phis_diff}, $\varphi_{1}\left(y\right)$
	is a monotonically increasing function of $y$ for $y\in\left(0,1\right)$ and thus $\varphi_1\left(0\right)=1$ implies $\varphi_1\left(y\right)>1$ for $y\in\left(0,1\right]$. Similarly, it follows from the second equation of \eqref{eq:phi_syst_lin} that  $\varphi_{2}\left(y\right)$ decreases monotonically in $\left[0,1\right]$ starting from the value $\varphi_{2}\left(0\right)=1$.
	Therefore, to guarantee the positivity of $\varphi_{2}\left(y\right)$ in $\left[0,1\right]$,
	it suffices to require that $\varphi_2\left(1\right)>0$. 
We can write the solution of \eqref{eq:phi_syst_lin}
explicitly:
\begin{equation}\label{eq:phi1_sol}
\varphi_{1}\left(y\right)=\frac{e^{\gamma_{0}y}}{\sqrt{\lambda^{2}+\gamma_{0}^{2}}}\left[\sqrt{\lambda^{2}+\gamma_{0}^{2}}\cosh\left(\sqrt{\lambda^{2}+\gamma_{0}^{2}}y\right)+\left(\lambda-\gamma_{0}\right)\sinh\left(\sqrt{\lambda^{2}+\gamma_{0}^{2}}y\right)\right],
\end{equation}
\begin{equation}\label{eq:phi2_sol}
\varphi_{2}\left(y\right)=\frac{e^{\gamma_{0}y}}{\sqrt{\lambda^{2}+\gamma_{0}^{2}}}\left[\sqrt{\lambda^{2}+\gamma_{0}^{2}}\cosh\left(\sqrt{\lambda^{2}+\gamma_{0}^{2}}y\right)-\left(\lambda+\gamma_{0}\right)\sinh\left(\sqrt{\lambda^{2}+\gamma_{0}^{2}}y\right)\right],
\end{equation}

Then, the above requirement $\varphi_2\left(1\right)>0$ is equivalent to
\begin{equation}\label{eq:tanh_ineq1}
\tanh\left({\sqrt{\gamma_{0}^{2}+\lambda^{2}}}\right)<\frac{\sqrt{\gamma_0^2+\lambda^2}}{\gamma_0+\lambda}.
\end{equation}
To determine the interval of admissible values of $\lambda$
satisfying~\eqref{eq:tanh_ineq1}, we consider
the corresponding equality
\begin{equation}\label{eq:tanh_eq}
\tanh\left({\sqrt{\gamma_{0}^{2}+\lambda^{2}_{*}}}\right)=\frac{\sqrt{\gamma_0^2+\lambda^2_{*}}}{\gamma_0+\lambda_{*}},
\end{equation}
which implicitly defines a function $\lambda_{*}(\gamma_0)$ {on} $\mR_+$. The proof of the unique solvability of~\eqref{eq:tanh_eq} for
$\lambda_*$, as well as the fact that~\eqref{eq:tanh_ineq1} then holds
for all {$\lambda\in(0,\lambda_*)$}, is deferred to
Lemma~\ref{lem:tanh_eq} in the Appendix {(inequality~\eqref{eq:tanh_ineq1} is trivially satisfied for $\lambda=0$)}.
The function $\lambda_{*}(\gamma_0)$ is illustrated in Figure \ref{fig:lmb_vs_gam}. 
We see that $\lambda_{*}(\gamma_0)$ is monotonically decreasing with $\lambda_*\to0$ as $\gamma_0\to\infty$ (this can also be verified analytically by implicit differentiation of \eqref{eq:tanh_eq}). 

While the parameter choice $\lambda=\lambda_*$ is not admissible
for the construction of the weight functions (since
$\varphi_2\left(1\right)=0$ if $\lambda=\lambda_*$), any value
$\lambda\in(0,\lambda_{*})$ is admissible, and therefore
  guarantees positivity of $\varphi_1$, $\varphi_2$. 

\begin{figure}
\includegraphics[scale=0.8]{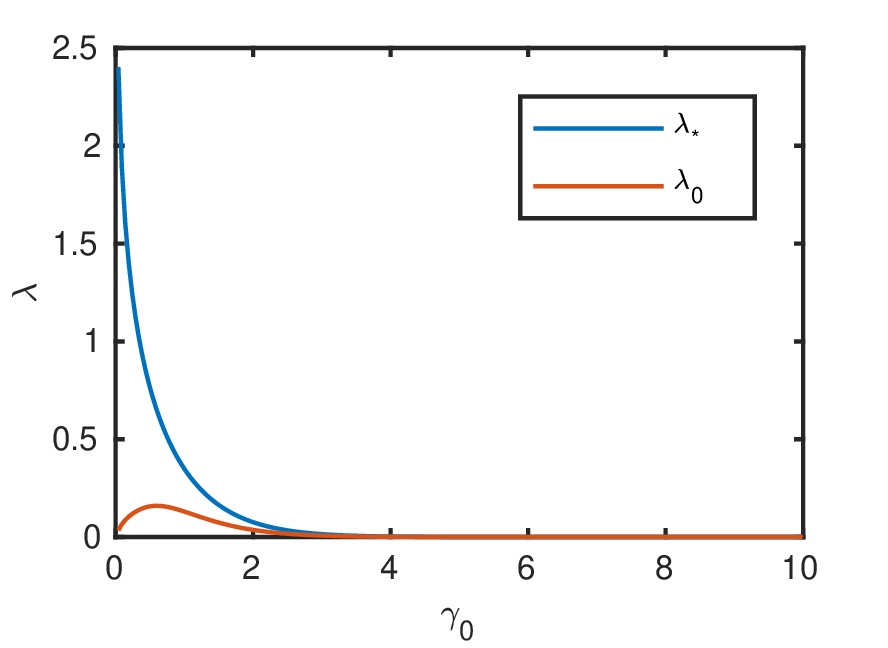}
\caption{Blue: function $\lambda_*(\gamma_0)$, implicitly defined by \eqref{eq:tanh_eq}. Red: function $\lambda_0(\gamma_0)$, defined in~\eqref{eq:lmb0_simpl1}.}\label{fig:lmb_vs_gam}
\end{figure}

Since the function $\lambda_{*}(\gamma_0)$ is not defined explicitly,
we give an example of an explicit lower bound that eventually yields the explicit (but suboptimal) decay rate stated in \eqref{eq:u_infty_1D}. The function 
\begin{equation}
{\lambda_0(\gamma_0):= \gamma_{0}e^{-2\gamma_{0}}\sqrt{1-4\gamma_{0}e^{-4\gamma_{0}}}\label{eq:lmb0_simpl1}}
\end{equation}
satisfies \eqref{eq:tanh_ineq1} and
$\lambda_0(\gamma_0)<\lambda_*(\gamma_0)$ for any $\gamma_0>0$,
and it is depicted in Figure \ref{fig:lmb_vs_gam}. The proof of
  the inequality $\lambda_0(\gamma_0)<\lambda_*(\gamma_0)$
is given in Lemma~\ref{lem:A2} in the Appendix. 

Therefore, we can take $\lambda=\lambda_0:=\lambda_0(\gamma_0)$, which satisfies inequality \eqref{eq:tanh_ineq1}. Hence the weights $\phi_1\left(y\right):=\varphi_1\left(y\right)>0$, $\phi_2\left(y\right):=\varphi_2\left(y\right)>0$ satisfy \eqref{eq:phi_syst}.
Consequently, we obtain from \eqref{eq:E_loc_dt} and~\eqref{eq:E_loc_mod}
\begin{equation}\label{eq:E_loc_dec}
E_{loc}^{\prime}\left(\tau\right)\leq-\lambda_0 E_{loc}\left(\tau\right),\hspace{1em}\tau>0\hspace{1em}\Longrightarrow\hspace{1em}E_{loc}\left(\tau\right)\le E_{loc}\left(0\right)e^{-\lambda_0 \tau},\hspace{1em}\tau\geq 0.
\end{equation}

	
\subsection*{Step 3. Convergence of the solution and identification of the constant~$u_{\infty}$.}
	
	The local energy functional \eqref{eq:E_loc_mod} can be rewritten in the original variables $\left(x,t\right)$, 
{and we {emphasise} here that it is a functional of the approximate classical solution $u^n$:}
	\begin{align*}
         {\widetilde{E}_{loc}[u^n]}\left(t\right):=E_{loc}\left(\frac{t}{t_0}\right)=&\frac{t_0}{2}\int_{0}^{x_{0}}\left[{\widetilde{\varphi}_1}
\left(x\right)\left(\partial_{t}u^n\left(x,t\right)+\sqrt{\frac{\alpha\left(x\right)}{\beta\left(x\right)}}\partial_{x}u^n\left(x,t\right)\right)^{2}\right.\\
 &\left.+{\widetilde{\varphi}_2}
 \left(x\right)\left(\partial_{t}u^n\left(x,t\right)-\sqrt{\frac{\alpha\left(x\right)}{\beta\left(x\right)}}\partial_{x}u^n\left(x,t\right)\right)^{2}\right]\beta\left(x\right)dx,
	\end{align*}
	where ${\widetilde{\varphi}_1}
        \left(x\right):=\varphi_{1}\left(y\left(x\right)\right)$,
	${\widetilde{\varphi}_1}
        \left(x\right):=\varphi_{2}\left(y\left(x\right)\right)$
	are given by \eqref{eq:phi1_sol}--\eqref{eq:phi2_sol}, with
        $\lambda=\lambda_0$ and the mapping $x\mapsto y\left(x\right)$
        given by \eqref{eq:y_def}. Then, from~\eqref{eq:E_loc_dec}, we obtain
$$
   {\widetilde{E}_{loc}[u^n]}\left(t\right)\le 
    {\widetilde{E}_{loc}[u^n]}\left(0\right)e^{-\frac{\lambda_0}{t_0} t},\hspace{1em}t\geq 0,
$$
{where the decay rate $\lambda_0/t_0$ is independent of the index $n$. Due to the convergence~\eqref{un-conv}{,} we can pass to the limit $n\to\infty$ and obtain the same inequality for the mild solution $u$:
\begin{equation}\label{ineq:tE-decay}
  \widetilde{E}_{loc}[u]\left(t\right)\le 
    \widetilde{E}_{loc}[u]\left(0\right)e^{-\frac{\lambda_0}{t_0} t},\hspace{1em}t\geq 0{,}
\end{equation}
and, due to the regularity of $u$ in~\eqref{eq:u_reg}, we have $\widetilde{E}_{loc}[u]\in C\left(\mathbb{R}_+\right)$.}

Setting 
\[
C_{0}:={\min}\left\{ \underset{y\in\left(0,1\right)}{\inf}\varphi_{1}\left(y\right),\underset{y\in\left(0,1\right)}{\inf}\varphi_{2}\left(y\right)\right\}={\min}\left\{\varphi_1\left(0\right),\varphi_2\left(1\right)\right\}=\varphi_2\left(1\right)>0,
\]
we have
	\begin{equation*}
	{\widetilde{E}_{loc}[u]}\left(t\right)\geq C_{0}t_0\int_{0}^{x_{0}}\left[\beta\left(x\right)\left(\partial_{t}u\left(x,t\right)\right)^{2}+\alpha\left(x\right)\left(\partial_{x}u\left(x,t\right)\right)^{2}\right]dx,
	\end{equation*}
	and thus deduce that
	\begin{equation}
	\left\Vert \partial_{t}u\left(\cdot,t\right)\right\Vert _{L^{2}\left(0,x_{0}\right)}^{2}\leq\frac{\left\Vert 1/\beta\right\Vert _{L^{\infty}\left(0,x_{0}\right)}}{C_{0}t_0} 
	 {\widetilde{E}_{loc}[u]}\left(t\right),\hspace{1em}\left\Vert \partial_{x}u\left(\cdot,t\right)\right\Vert _{L^{2}\left(0,x_{0}\right)}^{2}\leq\frac{\left\Vert 1/\alpha\right\Vert _{L^{\infty}\left(0,x_{0}\right)}}{C_{0}t_0} 
	  {\widetilde{E}_{loc}[u]}\left(t\right).\label{eq:u_x_t_bnds}
	\end{equation}
This{, together with~\eqref{ineq:tE-decay},} yields the exponential
decay of $\left\Vert \partial_{t}u\right\Vert
_{L^{2}\left(0,x_{0}\right)}$ and $\left\Vert \partial_{x}u\right\Vert
_{L^{2}\left(0,x_{0}\right)}$ claimed in \eqref{eq:u_exp_conv}. In
  order to complete the proof, it remains to identify $u_\infty$,
{and prove that, for all $t\ge 0$, $u(\cdot,t)$ converges in
  $H^1(0,x_0)$
to $u_\infty$.}

Using the regularity of $u$ given by \eqref{eq:u_reg},	
	we have, for $x\in\left(0,x_{0}\right)$, $t>0$,
	\[
	u\left(x,t\right)=u\left(0,t\right)+\int_{0}^{x}\partial_{x}u\left(s,t\right)ds=u\left(x_{0},t\right)-\int_{x}^{x_{0}}\partial_{x}u\left(s,t\right)ds,
      \]
      which implies
	\begin{equation}
          \left\Vert u\left(\cdot,t\right)-u\left(0,t\right)\right\Vert _{L^{2}\left(0,x_{0}\right)}^{2},\,\left\Vert u\left(\cdot,t\right)-u\left(x_{0},t\right)\right\Vert _{L^{2}\left(0,x_{0}\right)}^{2}\leq x_0^2\left\Vert \partial_{x}u\left(\cdot,t\right)\right\Vert _{L^{2}\left(0,x_{0}\right)}^{2}.\label{eq:u_bvp_bnds}
	\end{equation}
	
	Let us now integrate the wave equation in \eqref{eq:wave_1D} over
	the rectangle $\left(0,x_{0}\right)\times\left(0,t\right)$, $t>0$.
	Using the outflow boundary conditions \eqref{eq:u_reg}
	\[
	\left.\left(\partial_{x}-\sqrt{\frac{\beta_0}{\alpha_0}}\partial_{t}\right)u\left(x,t\right)\right|_{x=0}=0,\hspace{1em} \left.\left(\partial_{x}+\sqrt{\frac{\beta_0}{\alpha_0}}\partial_{t}\right)u\left(x,t\right)\right|_{x=x_{0}}=0,
	\]
	and the compact support of $u_{0}$, $u_{1}$, we obtain
	\begin{equation}
	\int_{0}^{x_{0}}\beta\left(s\right)\partial_{t}u\left(s,t\right)ds-\int_{0}^{x_{0}}\beta\left(s\right)u_{1}\left(s\right)ds=-\sqrt{\alpha_0\beta_0}\left(u\left(0,t\right)+u\left(x_{0},t\right)\right).\label{eq:u_IBVP_ident_prelim}
	\end{equation}
{Due to the weak regularity of the mild solution $u$, 
{the same} identity actually first needs to be derived for the classical
  solutions $u^n$ from \eqref{eq:un_reg}, and then{~\eqref{eq:u_IBVP_ident_prelim} follows by} passing to the
  limit} {$n\to\infty$.}
{From~}\eqref{eq:u_IBVP_ident_prelim}, we {deduce}
	\begin{align}
	u\left(x,t\right)-\frac{1}{2\sqrt{\alpha_0\beta_0}}\int_{0}^{x_{0}}\beta\left(s\right)u_{1}\left(s\right)ds=&\frac{1}{2}\left[u\left(x,t\right)-u\left(0,t\right)\right]+\frac{1}{2}\left[u\left(x,t\right)-u\left(x_{0},t\right)\right]\nonumber\\
	&-\frac{1}{2\sqrt{\alpha_0\beta_0}}\int_{0}^{x_{0}}\beta\left(s\right)\partial_{t}u\left(s,t\right)ds\label{eq:u_IBVP_ident}
	\end{align}
{for all $x\in\left(0,x_{0}\right)$ and all $t\ge0$.}
For the first two terms on the right-hand side
  of~\eqref{eq:u_IBVP_ident}, we have
	\[
\left\Vert u\left(x,t\right)-u\left(0,t\right)\right\Vert_{H^{1}\left(0,x_0\right)}^2=\left\Vert u\left(x,t\right)-u\left(0,t\right)\right\Vert_{L^{2}\left(0,x_0\right)}^2+\left\Vert \partial_{x}u\left(x,t\right)\right\Vert_{L^{2}\left(0,x_0\right)}^2,
	\]
	\[
\left\Vert u\left(x,t\right)-u\left(x_0,t\right)\right\Vert_{H^{1}\left(0,x_0\right)}^2=\left\Vert u\left(x,t\right)-u\left(x_0,t\right)\right\Vert_{L^{2}\left(0,x_0\right)}^2+\left\Vert \partial_{x}u\left(x,t\right)\right\Vert_{L^{2}\left(0,x_0\right)}^2.
	\] 
Moreover,
since the integral term on the right-hand side of
\eqref{eq:u_IBVP_ident} is independent of~$x$, 
we also have
{
  \[
    \begin{split}
    \left\Vert
 \frac{1}{2\sqrt{\alpha_0\beta_0}}\int_{0}^{x_{0}}\beta\left(s\right)\partial_{t}u\left(s,t\right)ds     
\right\Vert_{H^{1}\left(0,x_0\right)}
&=    \left\Vert
 \frac{1}{2\sqrt{\alpha_0\beta_0}}\int_{0}^{x_{0}}\beta\left(s\right)\partial_{t}u\left(s,t\right)ds     
\right\Vert_{L^{2}\left(0,x_0\right)}\\
&\le \frac{1}{2}\sqrt{\frac{x_0}{\alpha_0\beta_0}}
\left\Vert\beta\right\Vert_{L^2\left(0,x_0\right)}\left\Vert\partial_t u\right\Vert_{L^2\left(0,x_0\right)}.
\end{split}
    \]}
    
\noindent
Therefore, taking the $H^{1}\left(0,x_{0}\right)$ norm of the identity
{in} \eqref{eq:u_IBVP_ident}, with $u_{\infty}$ defined as in~\eqref{eq:u_infty_1D},
we deduce from 
\eqref{eq:u_bvp_bnds} that
	\begin{align*}
	\left\Vert u\left(\cdot,t\right)-u_{\infty}\right\Vert
          _{H^{1}\left(0,x_0\right)}
          \leq
          &\frac{1}{2}\left\Vert u\left(x,t\right)-u\left(0,t\right)\right\Vert_{H^{1}\left(0,x_0\right)}+\frac{1}{2}\left\Vert u\left(x,t\right)-u\left(x_0,t\right)\right\Vert_{H^{1}\left(0,x_0\right)}\\
          	&+{\frac{1}{2}\sqrt{\frac{x_0}{\alpha_0\beta_0}}\left\Vert\beta\right\Vert_{L^2\left(0,x_0\right)}\left\Vert\partial_t u\right\Vert_{L^2\left(0,x_0\right)}}\\
          \leq
          &\sqrt{1+x_0^2}\left\Vert\partial_x u\right\Vert_{L^2\left(0,x_0\right)}+\frac{1}{2}\sqrt{\frac{x_0}{\alpha_0\beta_0}}\left\Vert\beta\right\Vert_{L^2\left(0,x_0\right)}\left\Vert\partial_t u\right\Vert_{L^2\left(0,x_0\right)}.
	\end{align*}
	Finally, employing \eqref{eq:u_x_t_bnds}{,} we obtain
	\begin{align*}
	\left\Vert u\left(\cdot,t\right)-u_{\infty}\right\Vert _{H^{1}\left(0,x_{0}\right)}\leq&\frac{1}{2 \left(C_{0}t_0\right)^{1/2}}\left[2\left(\left(1+x_{0}^2\right)\left\Vert 1/\alpha\right\Vert _{L^{\infty}\left(0,x_{0}\right)}\right)^{1/2}\right.\\
	&+\left.\left(\frac{x_0}{\alpha_0\beta_0}\left\Vert 1/\beta\right\Vert _{L^{\infty}\left(0,x_{0}\right)}\right)^{1/2}\left\Vert \beta\right\Vert _{L^{2}\left(0,x_{0}\right)}\right]
	 {\widetilde{E}_{loc}[u]^{1/2}}\left(t\right)
	\end{align*}
	which, together with~\eqref{ineq:tE-decay}, leads to the
          bound of $\left\Vert
            u\left(\cdot,t\right)-u_{\infty}\right\Vert
          _{H^{1}\left(0,x_{0}\right)}$ in
        \eqref{eq:u_exp_conv}. 
{From~\eqref{eq:lmb0_simpl1} and~\eqref{ineq:tE-decay}, we find the claimed decay rate {$\Lambda=\frac{\gamma_{0}}{2 \, t_0}e^{-2\gamma_{0}}\left(1-4\gamma_{0}e^{-4\gamma_{0}}\right)^{1/2}$.}}   

Since $\left\Vert
          u\left(\cdot,t\right)-u_{\infty}\right\Vert
        _{H^{1}\left(0,x_{0}\right)}$ decays to zero, $u_\infty$ is
        uniquely defined as in~\eqref{eq:u_infty_1D}. This
          concludes the proof. \qed

        {
          In the proof of Theorem \ref{thm:1D_decay},
modified weight functions could also have been obtained by imposing the
initial condition for \eqref{eq:phi_syst} at an internal point $y_0$ in
$(0,1)$ instead of at $y_0=0$, and then solving the two  corresponding linear ODEs forward and backward. 
Potentially, this may lead to 
an increased maximal decay rate,
as compared to that we have deduced from~\eqref{eq:tanh_eq}.
}
       
We also remark that the decay rate given by $\lambda_0(\gamma_0)$ is
exponentially small for large~$\gamma_0$. This is not an artefact of
the approximation of $\lambda_*(\gamma_0)$ by $\lambda_0(\gamma_0)$
(see Figure~\ref{fig:lmb_vs_gam}), but is rather characteristic to the weight function approach in the modified local energy.


\section{Proof of Theorem \ref{thm:better_rate}}\label{sec:better_rate_proof}
As in the proof of Theorem~\ref{thm:1D_decay} in Section~\ref{sec:1D_decay_proof}, we assume, without loss of generality, that $\Omega=\Omega_{in}=\Omega_0=\left(0,x_0\right)$ for some $x_0>0$. 	
	
	To prove the claim, we shall modify Step 1 of the proof of Theorem \ref{thm:1D_decay}. 
	Namely, instead of working with the modified local energy
        functional \eqref{eq:E_loc_mod}, we shall focus on its
        conventional counterpart \eqref{eq:E_loc} and estimate the quantities that enter it. 
        {Since we shall not
          differentiate the energy functional in time, we 
          work here directly with the mild solution $u$.}
        {We consider again equation~\eqref{eq:v_pbm} in the
          variables~$(y,\tau)$ given by~\eqref{eq:y_def}--\eqref{eq:tau_def}.
By treating the third term on the left-hand side as a (known)
          source term, it results in a wave equation with constant
          coefficients, to which we} 
apply the d'Alembert  formula:
	\begin{equation}
	v\left(y,\tau\right)=\frac{1}{2}\left[v_{0}\left(y+\tau\right)+v_{0}\left(y-\tau\right)\right]+\frac{1}{2}\int_{y-\tau}^{y+\tau}v_{1}\left(z\right)dz-\frac{1}{2}\int_{0}^{\tau}\int_{y-s}^{y+s}\gamma\left(z\right)\partial_{y}v\left(z,\tau-s\right)dz ds.\label{eq:dAlemb}
	\end{equation}
	Therefore, we obtain  
	\begin{align}
	\partial_{y}v\left(y,\tau\right)= & \frac{1}{2}\left[v_{0}^{\prime}\left(y+\tau\right)+v_{0}^{\prime}\left(y-\tau\right)\right]+\frac{1}{2}\left[v_{1}\left(y+\tau\right)-v_{1}\left(y-\tau\right)\right]\nonumber \\
	& +\frac{1}{2}\int_{0}^{\tau}\left[\gamma\left(y-s\right)\partial_{y}v\left(y-s,\tau-s\right)-\gamma\left(y+s\right)\partial_{y}v\left(y+s,\tau-s\right)\right]ds,\label{eq:dv_dy}
	\end{align}
	\begin{align}
	\partial_{\tau}v\left(y,\tau\right)= & \frac{1}{2}\left[v_{0}^{\prime}\left(y+\tau\right)-v_{0}^{\prime}\left(y-\tau\right)\right]+\frac{1}{2}\left[v_{1}\left(y+\tau\right)+v_{1}\left(y-\tau\right)\right]\nonumber \\
	& -\frac{1}{2}\int_{0}^{\tau}\left[\gamma\left(y-s\right)\partial_{y}v\left(y-s,\tau-s\right)+\gamma\left(y+s\right)\partial_{y}v\left(y+s,\tau-s\right)\right]ds.\label{eq:dv_dt}
	\end{align}
        Identity~\eqref{eq:dv_dt} becomes evident 
by making a change of variable $s\mapsto \tau-s$ in the
        double-integral term in~\eqref{eq:dAlemb} 
        before and after the differentiation with respect to $\tau$: 
\begin{align*}
&\partial_\tau\int_{0}^{\tau}\int_{y-s}^{y+s}\gamma\left(z\right)\partial_{y}v\left(z,\tau-s\right)dz ds=\partial_\tau\int_{0}^{\tau}\int_{y-\tau+s}^{y+\tau-s}\gamma\left(z\right)\partial_{y}v\left(z,s\right)dz ds\\
&=\int_{0}^{\tau}\left[\gamma\left(y+\tau-s\right)\partial_{y}v\left(y+\tau-s,s\right)+\gamma\left(y-\tau+s\right)\partial_{y}v\left(y-\tau+s,s\right)\right]ds\\
&=\int_{0}^{\tau}\left[\gamma\left(y+s\right)\partial_{y}v\left(y+s,\tau-s\right)+\gamma\left(y-s\right)\partial_{y}v\left(y-s,\tau-s\right)\right]ds.
\end{align*}

	Note that{, for $\tau\ge1$,} the 
        terms in \eqref{eq:dv_dy} and \eqref{eq:dv_dt}
{that do not contain the integral}
vanish 
{for all $y$ in} $\left(0,1\right)$. 
	To estimate $\mathcal{E}_{loc}$ for $\tau\ge1$, we
        insert~\eqref{eq:dv_dy} and~\eqref{eq:dv_dt} in {the
          definition of $\mathcal{E}_{loc}\left(\tau\right)$ in}~\eqref{eq:E_loc} and use that $\textrm{supp }\gamma\subset\left(0,1\right)$:
	\begin{align}
	\label{eq:E_int_estim}\mathcal{E}_{loc}\left(\tau\right)= & \frac{1}{4}\int_{0}^{1}\left[\left(\int_{y}^{1}\gamma\left(r\right)\partial_{y}v\left(r,\tau+y-r\right)dr\right)^{2}\right.\\
	&\left.+\left(\int_{0}^{y}\gamma\left(r\right)\partial_{y}v\left(r,\tau-y+r\right)dr\right)^{2} \right]\frac{dy}{\rho\left(y\right)}\nonumber\\
	\leq & \frac{1}{4}\int_{0}^{1}\left[\int_{y}^{1}\rho\left(\tilde{r}\right)\gamma^{2}\left(\tilde{r}\right)d\tilde{r}\int_{y}^{1}\left(\partial_{y}v\left(r,\tau+y-r\right)\right)^{2}\frac{dr}{\rho\left(r\right)}\right.\nonumber\\
&\left.+\int_{0}^{y}\rho\left(\tilde{r}\right)\gamma^{2}\left(\tilde{r}\right)d\tilde{r}\int_{0}^{y}\left(\partial_{y}v\left(r,\tau-y+r\right)\right)^{2}\frac{dr}{\rho\left(r\right)}                                                                          \right]\frac{dy}{\rho\left(y\right)}.\nonumber
        \end{align}
Denoting
	\begin{equation*}
	b_{0}:=\left\Vert \alpha\right\Vert
        _{L^{\infty}\left(0,x_{0}\right)}^{3/2}\left\Vert
          \beta\right\Vert
        _{L^{\infty}\left(0,x_{0}\right)}^{1/2}\left\Vert
          \left(1/\left(\alpha\beta\right)^{1/2}\right)^{\prime}\right\Vert
        _{L^{2}\left(0,x_{0}\right)}^{2},
	\end{equation*}
{as in}~\eqref{eq:b0_cond}, we have 
\begin{equation*}
\max\left\{\sup_{y\in\left(0,1\right)}\frac{1}{\rho\left(y\right)}\int_{0}^{y}\rho\left(s\right)\gamma^{2}\left(s\right)ds,\sup_{y\in\left(0,1\right)}\frac{1}{\rho\left(y\right)}\int_{y}^{1}\rho\left(s\right)\gamma^{2}\left(s\right)ds\right\}\leq b_0t_0.
\end{equation*}
Therefore, we continue with~\eqref{eq:E_int_estim}, and estimate
\begin{align}\label{eq:E_loc_ineq_estim}
	\mathcal{E}_{loc}\left(\tau\right) &\leq\frac{b_{0}t_0}{4}\left[\int_{0}^{1}\int_{0}^{y}\left(\partial_{y}v\left(r,\tau-y+r\right)\right)^{2}\frac{1}{\rho\left(r\right)}drdy\right.\\
 &\phantom{=}\qquad\quad\left.+\int_{0}^{1}\int_{y}^{1}\left(\partial_{y}v\left(r,\tau+y-r\right)\right)^{2}\frac{1}{\rho\left(r\right)}drdy\right].\nonumber
\end{align}
{In the two terms on the right-hand side
  of~\eqref{eq:E_loc_ineq_estim}, the integration involves the values of $\partial_y v$ on two right-angled triangles with vertices $\left\{\left(0,\tau-1\right),\left(0,\tau\right),\left(1,\tau\right)\right\}$ and $\left\{\left(1,\tau-1\right),\left(0,\tau\right),\left(1,\tau\right)\right\}$. These triangles are parametrised by the sets of lines parallel to their hypotenuses, which are the two characteristic families of the wave equation in \eqref{eq:v_pbm}. We now change the integration variables to have parametrisation of these triangles by the sets of horizontal lines. Consequently, for the first integral, denoting $\sigma:=\tau-y+r$, we have that
  $\sigma\in(\tau-1,\tau)$ and $r\in(0,\sigma-\tau+1)$. Similarly, for
  the second integral, denoting $\sigma:=\tau+y-r$, we have that
  $\sigma\in(\tau-1,\tau)$ and $r\in(\tau-\sigma,1)$. Therefore, rewriting  
  the integrals in~\eqref{eq:E_loc_ineq_estim} as integrals in the variables
  $\sigma$ and $r$, we obtain
  \[
    \begin{split}
     & \int_{0}^{1}\int_{0}^{y}\left(\partial_{y}v\left(r,\tau-y+r\right)\right)^{2}\frac{1}{\rho\left(r\right)}drdy
     +\int_{0}^{1}\int_{y}^{1}\left(\partial_{y}v\left(r,\tau+y-r\right)\right)^{2}\frac{1}{\rho\left(r\right)}drdy\\
     &\qquad=
     \int_{\tau-1}^{\tau}\int_{0}^{\sigma-\tau+1}\left(\partial_{y}v\left(r,\sigma\right)\right)^{2}\frac{1}{\rho\left(r\right)}drd\sigma+\int_{\tau-1}^{\tau}\int_{\tau-\sigma}^{1}\left(\partial_{y}v\left(r,\sigma\right)\right)^{2}\frac{1}{\rho\left(r\right)}drd\sigma.
      \end{split}
    \]
With this identity, taking into account the positivity of the
integrands, we obtain from~\eqref{eq:E_loc_ineq_estim} that
}
\begin{align}\label{eq:E_loc_ineq_estim1}  
  \mathcal{E}_{loc}\left(\tau\right)
    \leq\frac{b_{0}t_0}{2}\int_{\tau-1}^{\tau}\int_{0}^{1}\left(\partial_{y}v\left(r,\sigma
\right)\right)^{2}\frac{1}{\rho\left(r\right)}drd\sigma
      \leq
  b_{0}t_0\int_{\tau-1}^{\tau}\mathcal{E}_{loc}\left(\sigma\right)d\sigma.
\end{align}
Since $\mathcal{E}_{loc}\left(\tau\right)$ is a positive non-increasing
function (recall \eqref{eq:E_aux_dt}), it follows
from~\eqref{eq:E_loc_ineq_estim1} that
	\[
	\mathcal{E}_{loc}\left(\tau\right)\leq b_{0}t_0\,\mathcal{E}_{loc}\left(\tau-1\right),\hspace{1em}\tau\ge1.
	\]
	Iterating this inequality $\left\lfloor \tau\right\rfloor $
	times (where $\left\lfloor X\right\rfloor $ denotes the integer part
	of $X$), {and using the assumption $b_{0}t_0<1$,} we obtain, for $\tau \geq 1$,
	\begin{eqnarray*}
	\mathcal{E}_{loc}\left(\tau\right)&\leq&
\mathcal{E}_{loc}\left(\lfloor\tau\rfloor\right)\leq
 (b_{0}t_0)^2\mathcal{E}_{loc}\left(\lfloor\tau\rfloor-2\right)\leq\ldots
\leq {(b_{0}t_0)^{\lfloor\tau\rfloor}\mathcal{E}_{loc}\left(0\right)}\nonumber\\
  &=&  \mathcal{E}_{loc}\left(0\right)e^{\lfloor\tau\rfloor\log (b_{0}t_0)} \leq
	 \mathcal{E}_{loc}\left(0\right)e^{(\tau-1)\log (b_{0}t_0)}
	=\mathcal{E}_{loc}\left(0\right)\frac{e^{\tau\log (b_{0}t_0)}}{b_0t_0},\label{eq:E_loc_dec_alt}
	\end{eqnarray*}
	which, due to 
        {$\log (b_{0}t_0)<0$}, shows an exponential decay {in $\tau$}.
	
	When scaling back to the original variables $(x,t)$ we obtain 
\[
  \tilde{\mathcal{E}}_{loc}\left(t\right) := 
  \mathcal{E}_{loc}\left(\frac{t}{t_0}\right) 
  \le \tilde{\mathcal{E}}_{loc}\left(0\right)\frac{e^{\frac{t}{t_0}\log (b_{0}t_0)}}{b_0t_0}, \hspace{1em}t\geq t_0.
\]
	Step 3 of the proof of Theorem \ref{thm:1D_decay} can then be repeated (with a slight modification
	of the multiplicative constants), yielding the exponential convergence (with rate \eqref{Lambda})
	of the solution to the constant $u_{\infty}$.

{We note that, even though the last bound was proved for $t\geq t_0$, it can be extended to $t\geq 0$ merely at the expense of enlarging the constant in front of the time-decaying exponential factor. This is due to the well-posedness result for the wave equation on a finite time-interval, see Proposition \ref{prop:WP} and, in particular, \eqref{eq:WP_estim}.}
\qed

\section{Conclusion}
\label{sec:conc}

For the homogeneous wave equation in $\mathbb R$ with 
compactly supported initial data, in the case of constant coefficients,
the solution converges to the
steady state in finite time. However, exponential
convergence is the generic scenario in the case of
variable coefficients.

We proved two different results for
this convergence in local energy. The first one employs
a multiplier technique, constructing appropriate weight functions
to define a modified local energy that satisfies a first order
differential inequality. This allowed us to prove
the exponential decay. 
Using the solution balance over space-time rectangles, we were
able to identify the constant steady state.
The second result improves the first one, under an additional
  assumption on the variation of the coefficients.
After rescaling the equation, its 
integral form allowed us to estimate
the local energy by iterating in time, leading to an
exponential decay with improved rates. 

Our results complement those of algebraic time-decay
presented in~\cite{Charao}, where 
  the initial data were assumed to be
  sufficiently localised but not compactly supported. In that case, exponential decay is generally not expected.   

\appendix
\renewcommand{\thesection}{}
\section{}
\renewcommand{\thesection}{\Alph{section}}
\begin{lem}\label{lem:tanh_eq}
For any fixed $\gamma_0>0$, the equation
\begin{equation}\label{eq:tanh_eq2}
{\tanh\left({\sqrt{\gamma_{0}^{2}+\lambda^{2}}}\right)=\frac{\sqrt{\gamma_0^2+\lambda^2}}{\gamma_0+\lambda}, \quad \lambda > 0,}
\end{equation}
{has a unique solution $\lambda=\lambda_{*}$.}
Moreover, for $\lambda>0$, the strict inequality \eqref{eq:tanh_ineq1} 
{is satisfied if and only if $\lambda\in(0,\lambda_*)$}.
\end{lem}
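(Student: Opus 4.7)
The strategy is to reformulate both \eqref{eq:tanh_eq2} and \eqref{eq:tanh_ineq1} in terms of the single scalar function
\[
F(\lambda) := \tanh\bigl(u(\lambda)\bigr) - \frac{u(\lambda)}{\gamma_0+\lambda}, \qquad u(\lambda):=\sqrt{\gamma_0^2+\lambda^2},\quad \lambda\ge 0,
\]
so that \eqref{eq:tanh_eq2} becomes $F(\lambda_*)=0$ and \eqref{eq:tanh_ineq1} becomes $F(\lambda)<0$. My plan is to obtain existence of a root from the intermediate value theorem, and uniqueness from a transversality argument, showing $F'(\lambda_0)>0$ at every root so that $F$ has only one sign change.

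For existence, first I would note $F(0)=\tanh(\gamma_0)-1<0$. For the behaviour at infinity, since $\tanh\bigl(u(\lambda)\bigr) = 1 - \cO(e^{-2\lambda})$ while a direct expansion gives $u(\lambda)/(\gamma_0+\lambda) = 1 - \gamma_0/\lambda + \cO(\lambda^{-2})$, one has $F(\lambda) = \gamma_0/\lambda + \cO(\lambda^{-2}) > 0$ for all sufficiently large $\lambda$. Continuity of $F$ and the intermediate value theorem then yield at least one root $\lambda_*>0$.

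The main step is the transversality at roots. Differentiating, using $u' = \lambda/u$ and the identity $\lambda(\gamma_0+\lambda)-u^2=\gamma_0(\lambda-\gamma_0)$, one obtains
\[
F'(\lambda) = \frac{\lambda}{u\,\cosh^2(u)} - \frac{\gamma_0\,(\lambda-\gamma_0)}{u\,(\gamma_0+\lambda)^2}.
\]
At any root $\lambda_0$, the defining relation $\tanh(u_0)=u_0/(\gamma_0+\lambda_0)$ gives $1/\cosh^2(u_0) = 1 - \tanh^2(u_0) = 2\gamma_0\lambda_0/(\gamma_0+\lambda_0)^2$, and the above expression collapses to
\[
F'(\lambda_0) = \frac{\gamma_0\,\bigl(2\lambda_0^2 - \lambda_0 + \gamma_0\bigr)}{u_0\,(\gamma_0+\lambda_0)^2}.
\]
To sign the numerator I would invoke the elementary inequality $\tanh(s)<s$ for $s>0$: applied with $s=u_0$ in the equation at $\lambda_0$, this forces $\gamma_0+\lambda_0 > 1$, i.e.\ $\gamma_0 > 1-\lambda_0$, whence
\[
2\lambda_0^2 - \lambda_0 + \gamma_0 \;>\; 2\lambda_0^2 - 2\lambda_0 + 1 \;=\; 2\bigl(\lambda_0-\tfrac12\bigr)^2 + \tfrac12 \;>\; 0.
\]
Hence $F'(\lambda_0)>0$ at every root. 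This is the cleverest part of the argument and the only place where the precise structure of the equation is used; the identity for $F'(\lambda_0)$ and the extraction of $\gamma_0+\lambda_0>1$ from $\tanh(s)<s$ are exactly what forces the bracket to be positive for all $\gamma_0>0$.

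Uniqueness then follows easily: transversality makes every zero isolated and each a single $-$ to $+$ crossing; since $F(\lambda)>0$ for all large $\lambda$ there cannot be two such crossings, so the root $\lambda_*$ is unique. Finally, $F(0)<0$, the absence of zeros on $[0,\lambda_*)$, and continuity yield $F<0$ throughout $[0,\lambda_*)$, which is precisely \eqref{eq:tanh_ineq1}.
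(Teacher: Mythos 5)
Your proof is correct, and it takes a genuinely different route from the paper's. The paper normalises via $\tilde\lambda=\lambda_*/\gamma_0$, inverts the $\tanh$ to get the auxiliary function $f_{\gamma_0}(\tilde\lambda)=\gamma_{0}\sqrt{1+\tilde\lambda^{2}}-\artanh\bigl(\sqrt{1+\tilde\lambda^2}/(1+\tilde\lambda)\bigr)$, and then performs a global monotonicity analysis of $f_{\gamma_0}$: for $\gamma_0\ge 1/8$ the derivative is nonnegative everywhere, while for $\gamma_0\in(0,1/8)$ one must locate the two critical points and check numerically that $f_{\gamma_0}$ is still negative at its local maximum, which forces a case distinction and an explicit estimate ($\simeq -0.6019$). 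You instead work with $F(\lambda)=\tanh(u)-u/(\gamma_0+\lambda)$ directly and prove \emph{transversality at every root}: using $1-\tanh^2(u_0)=2\gamma_0\lambda_0/(\gamma_0+\lambda_0)^2$ at a root, $F'(\lambda_0)$ collapses to $\gamma_0(2\lambda_0^2-\lambda_0+\gamma_0)/\bigl(u_0(\gamma_0+\lambda_0)^2\bigr)$, and the constraint $\gamma_0+\lambda_0>1$ extracted from $\tanh(s)<s$ makes the bracket positive via $2\lambda_0^2-2\lambda_0+1>0$. (Note that the same quadratic structure, with discriminant $1-8\gamma_0$, surfaces in the paper's $f'_{\gamma_0}$, so the two arguments are secretly probing the same obstruction.) Your version buys a uniform treatment of all $\gamma_0>0$ with no case split and no numerical evaluation; the paper's version buys explicit information about where $\tilde\lambda_*$ sits relative to the critical points of $f_{\gamma_0}$ (e.g.\ $\tilde\lambda_*>\tilde\lambda_2$ in the small-$\gamma_0$ regime), which your local argument does not provide. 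All the computations in your proposal check out: $F(0)<0$, $F(\lambda)=\gamma_0/\lambda+\cO(\lambda^{-2})>0$ at infinity, the formula for $F'$, and the final sign argument giving uniqueness and $F<0$ on $[0,\lambda_*)$.
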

\begin{proof}
Since the right-hand side of \eqref{eq:tanh_eq2}, for {$\lambda>0$}, is smaller than 1, equation~\eqref{eq:tanh_eq2} is equivalent to 
\begin{equation}\label{eq:f_gam_def}
  f_{\gamma_0}(\tilde\lambda):=\gamma_{0}\sqrt{1+\tilde\lambda^{2}}
  -\artanh\left({\frac{\sqrt{1+\tilde\lambda^2}}{1+\tilde\lambda}}\right)=0,
\end{equation}
where {$\tilde\lambda:=\lambda/\gamma_0$}. We have
\[
  f_{\gamma_0}'(\tilde\lambda)=\frac{2\gamma_0\tilde\lambda^2-\tilde\lambda+1}{2\tilde\lambda \sqrt{1+\tilde\lambda^{2}}},\qquad
  \lim_{\tilde\lambda\to +\infty} f'_{\gamma_0}(\tilde\lambda) = \gamma_0>0.
\]
Hence {$\lim_{\tilde\lambda\to
    +\infty}f_{\gamma_0}(\tilde\lambda)=+\infty$}
and, moreover, {$\lim_{\tilde\lambda\to 0^+}f_{\gamma_0}(\tilde\lambda)=-\infty$.}
\\

\noindent
{\bf Case 1:} 
For $\gamma_0\ge1/8$, {we have} 
$f'_{\gamma_0}(\tilde\lambda)\ge0$ for any $\tilde\lambda>0$.
Hence, $f_{\gamma_0}$ is strictly increasing and $f_{\gamma_0}$ has a
unique zero at some $\tilde\lambda_*>0$ and
$f_{\gamma_0}(\tilde\lambda)<0$ for
$\tilde\lambda\in(0,\tilde\lambda_*)$.

\noindent
{\bf Case 2:} 
For $\gamma_0\in(0,1/8)$, we consider $f_{\gamma_0}(\tilde\lambda_1)$,
where $\tilde\lambda_1 := \frac{1-\sqrt{1-8\gamma_0}}{4\gamma_0}$ {is} the
first zero of $f_{\gamma_0}'$.
{Define
  $g(\gamma_0):=\frac{1-\sqrt{1-8\gamma_0}}{4\gamma_0}$. From
  $g^\prime(\gamma_0)>0$ for all $\gamma_0\in (0,1/8)$,
  $\lim_{\gamma_0\to 0^+}g(\gamma_0)=1$, and $g(1/8)=2$, we deduce that
$g$ is strictly increasing and $\tilde{\lambda}_1=g(\gamma_0)\in(1,2)$ for all
$\gamma_0\in(0,1/8)$. Since
$\artanh\left({\frac{\sqrt{1+\tilde\lambda^2}}{1+\tilde\lambda}}\right)$
is an increasing function of $\tilde\lambda$ in $(1,2)$, we obtain
from~\eqref{eq:f_gam_def} that
$f_{\gamma_0}(\tilde\lambda_1)\le \frac{\sqrt{1+4}}{8}-\artanh
\left(\frac{\sqrt{2}}{2}\right)\simeq -0.6019$.}
{This}
shows that $f_{\gamma_0}(\tilde\lambda_1)<0$
for $\gamma_0\in(0,1/8)$. Hence, $f_{\gamma_0}$ also has a unique zero
$\tilde\lambda_*{>\tilde\lambda_2}>0$ in this case{, where
  $\tilde\lambda_2:=\frac{1+\sqrt{1-8\gamma_0}}{4\gamma_0}$ is the
  second zero of $f_{\gamma_0}'$}.
\medskip

{This proves unique solvability of equation~\eqref{eq:tanh_eq2}. Moreover,
  the reasoning above also shows that, for any $\gamma_0>0$, there is
  a unique $\lambda_{*}=\lambda_{*}\left(\gamma_0\right)>0$ such that, for $\lambda>0$, the following equivalences hold true:
\begin{align*}
\lambda \in (0,\lambda_{*}) \quad & \Longleftrightarrow \quad f_{\gamma_0}\left(\lambda/\gamma_0\right) < 0 \quad \Longleftrightarrow \quad \sqrt{\gamma_{0}^2+\lambda^{2}}
  -\artanh\left({\frac{\sqrt{\gamma_{0}^2+\lambda^2}}{\gamma_{0}^2+\lambda}}\right) < 0 \\
  & \Longleftrightarrow \quad \tanh\left({\sqrt{\gamma_{0}^{2}+\lambda^{2}}}\right) < \frac{\sqrt{\gamma_0^2+\lambda^2}}{\gamma_0+\lambda}.
\end{align*}
Here, the last equivalence is due to the strict monotonicity of the $\tanh$ function. The second part of the statement thus follows.
}
\end{proof}


\begin{lem}\label{lem:app_ineq}
For any $a>0$ and 
{
\begin{equation}\label{q-assump}
1<q<\sqrt{1+\dfrac{e^{4a}+4a-e^{2a}\sqrt{e^{4a}+8a}}{8a^{2}}},
\end{equation}
}
we have
	\begin{equation}
	\frac{q}{\tanh\left(a q\right)}>1+\sqrt{q^{2}-1}.\label{eq:q_cond}
	\end{equation}

\end{lem}
\begin{proof}
{The proof is split into two steps.}
\\
{\bf Step 1.} We shall first analyse for which values $q_0\ge1$ the following auxiliary inequality holds:
	\begin{equation}
	\left(1+2a\right)e^{-2a}-2a e^{-2a}q_{0}^2\geq\sqrt{q_{0}^{2}-1}.\label{eq:q0_cond}
	\end{equation}

	Using $Q:=q_{0}^{2}\ge1$ we rewrite~\eqref{eq:q0_cond}
        as
	\begin{equation}
	1+2a-2a Q\geq e^{2a}\sqrt{Q-1}.\label{eq:Q_cond}
	\end{equation}
	This is equivalent to
	\begin{equation}
	4a^{2}Q^{2}-\left(4a\left(1+2a\right)+e^{4a}\right)Q+\left(1+2a\right)^{2}+e^{4a}\geq0,\label{eq:Q_cond_equiv}
	\end{equation}
	as long as both sides of \eqref{eq:Q_cond} are nonnegative, i.e.\ for $Q\in\left[1,\frac{1+2a}{2a}\right]$.
	
The left-hand side of  \eqref{eq:Q_cond_equiv} is a quadratic expression in $Q$ which is positive 
	for $Q\in\mathbb{R}\backslash\left[Q_{1},Q_{2}\right]$, with 
	\begin{align*}
	&Q_{1}:=Q_{0}-\frac{e^{2a}\sqrt{e^{4a}+8a}}{8a^{2}},\hspace{1em}Q_{2}:=Q_{0}+\frac{e^{2a}\sqrt{e^{4a}+8a}}{8a^{2}},&\\
	&Q_{0}:=1+\frac{e^{4a}+4a}{8a^{2}}.&
	\end{align*}	
	It can be checked easily that $Q_{1}\in\left(1,\frac{1+2a}{2a}\right)$, 
	and therefore any $Q\in\left[1,Q_{1}\right]$ satisfies~\eqref{eq:Q_cond}.
	Consequently, 
	{\eqref{eq:q0_cond} is satisfied if and only if $q_{0}\in\left[1,\sqrt{Q_{1}}\right]$.\\
	}

\noindent
{\bf Step 2.} We observe that since $e^{-bx}\geq e^{-b}\left(1+b-bx\right)$ for any $b,\,x\in\mR$ 
(by convexity of $e^{-bx}$), we have (with {$b:=2a>0$, $x:=q_0>1$})
	\begin{align}
	\frac{q_{0}}{\tanh\left(a q_{0}\right)}=&\frac{q_{0}\left(e^{a q_{0}}+e^{-aq_{0}}\right)}{e^{aq_{0}}-e^{-a q_{0}}}>q_{0}\left(1+e^{-2a q_{0}}\right)\nonumber\\
	\geq&q_{0}\left[1+e^{-2a}\left(1+2a-2aq_{0}\right)\right]\label{step2} \\ 
	\ge&1+\left(1+2a\right)e^{-2a}-2a e^{-2a}q_{0}^2.\nonumber
	\end{align}
{By assumption \eqref{q-assump}, we have $q\in\left(1,\sqrt{Q_{1}}\right)$. Therefore, \eqref{step2} can be combined with \eqref{eq:q0_cond}, to yield \eqref{eq:q_cond}.
}
\end{proof}


\begin{lem}\label{lem:A2}
The function 
\begin{equation}
{\lambda_0(\gamma_0):=\gamma_{0}e^{-2\gamma_{0}}\sqrt{1-4\gamma_{0}e^{-4\gamma_{0}}},\quad \gamma_0>0}\label{eq:lmb0_simpl}
\end{equation}
satisfies $\lambda_0(\gamma_0)<\lambda_*(\gamma_0)$, where $\lambda_*(\gamma_0)$ is defined via \eqref{eq:tanh_eq}. Moreover, each $\lambda=\lambda_0(\gamma_0)$ satisfies  inequality \eqref{eq:tanh_ineq1}. 
\end{lem}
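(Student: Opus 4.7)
The plan is to show directly that $\lambda_0=\lambda_0(\gamma_0)$ given by~\eqref{eq:lmb0_simpl} satisfies the strict inequality~\eqref{eq:tanh_ineq1}; the bound $\lambda_0(\gamma_0)<\lambda_*(\gamma_0)$ will then follow immediately from the analysis of the function $f_{\gamma_0}$ performed in the proof of Lemma~\ref{lem:tanh_eq}. Indeed, $f_{\gamma_0}$ is continuous with a single positive zero $\tilde\lambda_*=\lambda_*/\gamma_0$, and $f_{\gamma_0}(\tilde\lambda)<0$ precisely on $(0,\tilde\lambda_*)$. Hence, as soon as one checks that \eqref{eq:tanh_ineq1} (equivalently, $f_{\gamma_0}(\lambda_0/\gamma_0)<0$) holds with $\lambda=\lambda_0$, the ordering $\lambda_0<\lambda_*$ is automatic.

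To verify~\eqref{eq:tanh_ineq1} for $\lambda=\lambda_0$, I would set $s:=\sqrt{\gamma_0^{2}+\lambda_0^{2}}$ and rewrite $\tanh s=(1-e^{-2s})/(1+e^{-2s})$ to reduce the inequality to its equivalent form
\begin{equation*}
\frac{(\gamma_0+\lambda_0)-s}{(\gamma_0+\lambda_0)+s}<e^{-2s}.
\end{equation*}
Using the algebraic identity $(\gamma_0+\lambda_0)^{2}-s^{2}=2\gamma_0\lambda_0$, the left-hand side becomes $2\gamma_0\lambda_0/((\gamma_0+\lambda_0)+s)^{2}$, and the crude bound $(\gamma_0+\lambda_0)+s\geq 2\gamma_0$ (valid because $s\geq\gamma_0$) yields the sufficient condition
\begin{equation*}
\lambda_0\leq 2\gamma_0 e^{-2s}.
\end{equation*}

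Next, I would control $s$ from above by means of $\sqrt{1+x}\leq 1+x/2$, so that $s\leq\gamma_0+\lambda_0^{2}/(2\gamma_0)$ and $e^{-2s}\geq e^{-2\gamma_0-\lambda_0^{2}/\gamma_0}$. The task thus reduces to proving $\lambda_0\,e^{\lambda_0^{2}/\gamma_0}\leq 2\gamma_0 e^{-2\gamma_0}$ or, after squaring and using $\lambda_0^{2}=2\gamma_0^{2}e^{-4\gamma_0}(1-4\gamma_0 e^{-4\gamma_0})$,
\begin{equation*}
\bigl(1-4\gamma_0 e^{-4\gamma_0}\bigr)\,e^{2\lambda_0^{2}/\gamma_0}\leq 2.
\end{equation*}

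The final step is an elementary bound on the one-variable function $h(\gamma_0):=4\gamma_0 e^{-4\gamma_0}$: its maximum on $(0,\infty)$ is attained at $\gamma_0=1/4$ and equals $1/e$. Since $2\lambda_0^{2}/\gamma_0=h(\gamma_0)(1-h(\gamma_0))\leq h(\gamma_0)\leq 1/e$, we get $e^{2\lambda_0^{2}/\gamma_0}\leq e^{1/e}\approx 1.445$, and together with $1-4\gamma_0 e^{-4\gamma_0}\leq 1$ this gives a strict inequality with room to spare, $e^{1/e}<2$. The main (and essentially only) difficulty in this plan is keeping track of the various non-strict bounds to ensure that the final inequality for $\lambda=\lambda_0$ is indeed \emph{strict}; this is handled by noting that all intermediate bounds above hold with at least one strict inequality for $\gamma_0>0$ (in particular $e^{1/e}<2$), so that $f_{\gamma_0}(\lambda_0/\gamma_0)<0$, whence $\lambda_0<\lambda_*$.
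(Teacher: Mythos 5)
Your proposal is correct, and the core verification of \eqref{eq:tanh_ineq1} for $\lambda=\lambda_0(\gamma_0)$ proceeds along a genuinely different route from the paper's. The paper reduces \eqref{eq:tanh_ineq1} to the hypothesis of a separate technical lemma (Lemma~\ref{lem:app_ineq}, asserting $q/\tanh(aq)>1+\sqrt{q^2-1}$ under an explicit upper bound on $q$), and verifying that hypothesis requires, besides $\sqrt{1+x}<1+x/2$, the third-order Taylor bound $\sqrt{1+x}<1+x/2-x^{2}/8+x^{3}/16$ applied with $x=8\gamma_0e^{-4\gamma_0}$; that lemma in turn is proved via a quadratic discriminant computation and a convexity bound on $e^{-bx}$. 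You instead rewrite $\tanh s$ in exponential form, exploit the clean identity $(\gamma_0+\lambda_0)^2-s^2=2\gamma_0\lambda_0$ together with $\gamma_0+\lambda_0+s>2\gamma_0$, and after the single bound $s\le\gamma_0+\lambda_0^2/(2\gamma_0)$ reduce everything to the scalar inequality $(1-h)e^{h(1-h)}\le e^{1/e}<2$ with $h:=4\gamma_0e^{-4\gamma_0}\le 1/e$. This bypasses Lemma~\ref{lem:app_ineq} entirely and is arguably more transparent; the paper's structure, on the other hand, isolates the $\tanh$ estimate as a reusable standalone statement. I checked the individual steps: the equivalence $\tanh s<s/(\gamma_0+\lambda_0)\iff(\gamma_0+\lambda_0-s)/(\gamma_0+\lambda_0+s)<e^{-2s}$ is valid since $\gamma_0+\lambda_0>0$, the identity $2\lambda_0^2/\gamma_0=h(1-h)$ is correct, and strictness is secured by $e^{1/e}<2$. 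The concluding deduction $\lambda_0<\lambda_*$ from the sign analysis of $f_{\gamma_0}$ in Lemma~\ref{lem:tanh_eq} is the same as in the paper.
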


\begin{proof}
To show that $\lambda{:=}\lambda_0(\gamma_0)$ satisfies
\eqref{eq:tanh_ineq1} we first derive the following auxiliary
inequality:
\begin{align*}
\sqrt{1+\left(\lambda/\gamma_{0}\right)^{2}}
&= \sqrt{1+e^{-4\gamma_{0}}\left(1-4\gamma_{0}e^{-4\gamma_{0}}\right)} \nonumber\\
&< \sqrt{1+\dfrac{e^{4\gamma_{0}}+4\gamma_{0}-e^{2\gamma_{0}}\sqrt{e^{4\gamma_{0}}+8\gamma_{0}}}{8\gamma_{0}^2}},
\end{align*}
where we employed definition \eqref{eq:lmb0_simpl} and used the estimate 
$\sqrt{1+x}<1+x/2-x^{2}/8+x^{3}/16$, with $x=8\gamma_0e^{-4\gamma_0}>0$, which follows from the Taylor expansion of the square root. 

{The above inequality forms} exactly
the {assumption \eqref{q-assump}}
with $q:=\sqrt{1+\left(\lambda/\gamma_{0}\right)^{2}}>1$ and
$a:=\gamma_{0}$. The assertion of {Lemma~\ref{lem:app_ineq}}
is equivalent to the validity of \eqref{eq:tanh_ineq1}.

{Finally, Lemma \ref{lem:tanh_eq} yields 
$\lambda_0(\gamma_0)<\lambda_*(\gamma_0)$, which completes the present proof.}
\end{proof}

\bibliographystyle{abbrv}
\bibliography{limampl_vs_decay}
\bigskip
Email address: \href{mailto:anton.arnold@asc.tuwien.ac.at}{anton.arnold@asc.tuwien.ac.at}\\
Email address: \href{mailto:sjoerd.geevers@univie.ac.at}{sjoerd.geevers@univie.ac.at}\\
Email address: \href{mailto:ilaria.perugia@univie.ac.at}{ilaria.perugia@univie.ac.at}\\
Email address: \href{mailto:dmitry.ponomarev@asc.tuwien.ac.at}{dmitry.ponomarev@asc.tuwien.ac.at}

\end{document}